\newtheorem{theorem}{Theorem}[section]
\newtheorem{lemma}[theorem]{Lemma}
\theoremstyle{definition}
\newtheorem{definition}[theorem]{Definition}
\newtheorem{example}[theorem]{Example}
\newtheorem{proposition}[theorem]{Proposition}
\theoremstyle{remark}
\newtheorem{remark}[theorem]{Remark}
\numberwithin{equation}{section}
\newcommand{\att}{\big \vert}
\newcommand{\As}		{{\operatorname{\mathscr{A}}}}
\newcommand{\pr}			{{\operatorname{\mathsf{pr}}}}
\newcommand{\av}		{{\operatorname{\mathrm{av}}}}
\newcommand{\Vol}			{{\operatorname{\mathsf{Vol}}}}
\newcommand{\N}			{{\operatorname{\mathsf{N}}}}
\newcommand{\diff}		{{\operatorname{\mathrm{diff}}}}
\newcommand{\cont}		{{\operatorname{\mathrm{cont}}}}
\newcommand{\loc}		{{\operatorname{\mathrm{loc}}}}
\newcommand{\ncdiff}	{{\operatorname{\mathrm{n.c.diff}}}}
\newcommand{\acts}            {\mathbin{\triangleright}}
\newcommand{\form}	{{\operatorname{\llbracket \lambda \rrbracket}}}
\DeclareMathOperator{\im}{Im}
\DeclareMathOperator{\supp}{supp}
\DeclareMathOperator{\inv}{inv}
\DeclareMathOperator{\id}{id}
\DeclareMathOperator{\Hom}{Hom}
\DeclareMathOperator{\Der}{Der}
\DeclareMathOperator{\InnDer}{InnDer}
\DeclareMathOperator{\ih}{i}
\DeclareMathOperator{\HC}{HC}
\DeclareMathOperator{\HH}{HH}
\begin{document}

\title{Invariant Hochschild cohomology of smooth functions}

\author{Lukas Miaskiwskyi}
\curraddr{EWI/DIAM, P.O.Box 5031, 2600 GA Delft, The Netherlands
}
\email{l.t.miaskiwskyi@tudelft.nl}


\date{2-12-2020}

\keywords{Differential geometry, homological algebra, deformation quantization}

\begin{abstract}
Given an action of a Lie group on a smooth manifold, we discuss the induced action on the Hochschild cohomology of smooth functions, and notions of invariance on this space. Depending on whether one considers invariance of cochains or invariance of cohomology classes, two different spaces of invariants arise. We perform a general comparison of these notions, give an interpretation of the lower orders of the invariant cohomology spaces and conclude as our main result that for proper group actions both spaces are isomorphic. As a corollary and a geometric interpretation, an invariant version of the Hochschild-Kostant-Rosenberg theorem is given, identifying the cohomology of invariant cochains with invariant multivector fields. 
Using this theorem, we shortly discuss the invariant Hochschild cohomology in the case of homogeneous spaces. 
\keywords{Deformation quantization \and Homology theory \and Differential geometry \and Lie theory}
\end{abstract}

\maketitle

\section{Introduction}
Hochschild cohomology, initially investigated by Hochschild in \cite{hochschild1945cohomology} and significantly developed by Gerstenhaber in \cite{gerstenhaber1,gerstenhaber2,gerstenhaber3,gerstenhaber4,gerstenhaber5}, is found to be a valuable tool in deformation theory \cite{kontsevich1,kontsevich2}. Precisely, if one is interested in formal deformations of a certain algebra $\As$, it is well-known that the lower orders of the corresponding Hochschild cohomology $\HH^\bullet(\As, \As)$ characterize both the equivalence classes of infinitesimal deformations and the obstructions to order-by-order continuation of an arbitrary formal deformation, see \cite{gerstenhaber2}.

As one motivating example, in formal de\-for\-ma\-tion quan\-ti\-za\-tion, o\-ri\-gi\-nal\-ly con\-si\-de\-red in \cite{bayen1977quantum}, one is interested in constructing deformations of the algebra of smooth functions $C^\infty(M)$ on some manifold $M$ in order to equip this commutative algebra with a new, non-commutative multiplication. Indeed, here one is able to explicitly calculate continuous Hochschild cohomology by means of the Hochschild-Kostant-Rosenberg theorem, whereby the cohomology groups can be identified with multivector fields on the manifold (see \cite{HKR} for the original formulation of this theorem in a purely algebraic setting and \cite{kontsevich2} for the differential case). However, to gain a more refined, maybe even finite-dimensional classification, one needs to add further information to the setting. \footnote{One such piece of information is the choice of a Poisson structure on the manifold, which one requires deformations to respect in some sense. Notable mentions here are Fedosov's results \cite{fedosov1994simple} in the symplectic, and Kontsevich's results \cite{kontsevich1}, \cite{kontsevich2} in the general Poisson case.}

In the following, we want to consider the case where desirable cochains are invariant under a certain symmetry of the manifold, here modeled as the action of a Lie group. This idea is again motivated by deformation quantization, where the manifold is considered a physical phase space, which in practice often admits certain symmetries (e.g. translational, rotational, scaling). The requirement of a given deformation to also reflect this symmetry then naturally leads one to consider invariant Hochschild cochains.

For this document, which began as the author's Master thesis, we begin by describing the Hochschild cohomology in the most general setting in Section \ref{SectionHochschildCohomo}, while noting the most important specialties that arise in the case of the smooth functions. In Section \ref{SectionActionsAndInvariant} we describe how to equip this space with reasonable notions of invariance, and end up with two different notions: invariance of \emph{Hochschild cochains} on one hand and invariance of \emph{Hochschild cohomology classes} on the other. The invariant cochains naturally inherit the structure of a cochain complex, and we denote the \emph{cohomology of invariant cochains} by $\HH^\bullet_G(\As,\N)$, whereas the \emph{space of invariant classes} of the original cohomology is denoted $\HH^\bullet(\As,\N)^G$. There is a natural morphism
\begin{align}
\iota : \HH^\bullet_G(\As,\N) \to \HH^\bullet(\As,\N)^G
\end{align}
relating both notions. Injectivity and surjectivity of this map yield nontrivial statements about how the differential interacts with the group action, and will be our focus of investigation in this document.  In Section \ref{SectionLowOrders} we give explicit interpretations of the lower orders of the cohomology of invariant cochains, analogous to the ones given for the usual Hochschild cohomology in the context of deformation theory.

In Section \ref{SectionProperHKR} we prove our main result, which is that $\iota$ is an isomorphism in the case where $\As = C^\infty(M)$ is the space of smooth functions on a manifold and the group is acting properly on $M$. Properness of the action ensures the existence of invariant partitions of unity and therefore an averaging operator on the space of cochains, which will give us injectivity of the map $\iota$. The surjectivity on the other hand is a straightforward consequence of the classical Hochschild-Kostant-Rosenberg theorem and does not rely on any constraints on the group action. 

In particular, this isomorphism allows us to formulate an invariant Hochschild-Kostant-Rosenberg theorem, relating cohomology of invariant cochains to invariant multivector fields on the given manifold.
A few technical calculations  are moved to the appendix.

Note that the interplay of Hochschild (co-)homology and group actions has been studied extensively by others, from various different angles: One may for example name \cite{brodzki2017periodic}, \cite{brylinski1987cyclic}, and \cite{block1994equivariant}, which treat for given discrete or compact groups $G$ the problem of calculating equivariant cyclic/Hochschild homology. In particular, in \cite{brylinski1987cyclic} and \cite{block1994equivariant} the homology of the crossed product algebra $C^\infty(M \times G)$ equipped with a convolution product is discussed, which, as proven in \cite{brylinski1987algebras,brylinski1987cyclic}, can be identified with the homology of invariant Hochschild cochains for compact $G$, see also \cite[Thm 2.1]{block1994equivariant}. Also, \cite{neumaier2006homology} investigates invariant Hochschild cohomology of the crossed-product algebra and reaches a result similar to ours in the restriction of proper \'etale groupoids. In the context of the current paper, this may motivate further examination of the Hochschild cohomology of such crossed-product algebras and its relation to the  notions of invariant cohomology discussed in the following.  

The aforementioned literature does not appear to cover the precise results given here. In particular, we remark that our results are of a different flavour than the results of Hochschild homology of the crossed product algebras, the latter of which, for example, Brylinski considers in \cite{brylinski1987cyclic}. There, for instance, it is found that the Hochschild homology of a convolution algebra of compactly supported smooth functions $C^\infty_c(M \times G)$ reduces to the Hochschild homology of functions on the quotient space when $G$ is compact and acts freely. 
On the other hand, our notion of invariant Hochschild cohomology can be identified with invariant multivector fields, which, in general, do not simply arise from multivector fields on the quotient space.

\section{Hochschild cohomology}
\label{SectionHochschildCohomo}
We begin by recalling some basic definitions about the Hochschild complex and its cohomology.
Throughout the next sections, we fix some field $\mathbb{K}$, an associative $\mathbb{K}$-algebra $\As$ and an $\As$-bimodule $\N$. While we want to keep the definitions general for now, our main results will concern the smooth functions $C^\infty(M) = \As = \N$ on a smooth manifold $M$ and $\mathbb{K} = \mathbb{R}$ or $\mathbb{C}$.

\begin{definition}[Hochschild complex] Define the \emph{space of Hoch\-schild co\-chains} via
\begin{equation}
\begin{aligned}
\HC^\bullet(\mathcal A, \N) &:= \bigoplus_{n=0}^\infty \HC^n(\mathcal A, \N) \\ &:= N \oplus \Hom_\mathbb{K}(\As,\N) \oplus \Hom_\mathbb{K}(\As^{\otimes 2},\N) \oplus \Hom_\mathbb{K}(\As^{\otimes 3},\N) \oplus\dots.
\end{aligned}
\end{equation}
Define the \emph{Hochschild (co-)differential} $\delta : \HC^\bullet(\As,\N) \to \HC^{\bullet + 1}(\As,\N)$ for elements $\phi \in \HC^n(\As,\N)$ via
\begin{equation}
\begin{aligned}
(\delta \phi)(a_0, \dots , a_n) 
:=& \,
a_0 \cdot \phi(a_1 , \dots , a_n) 
+ 
(-1)^{n+1} \phi(a_0 , \dots , a_{n-1}) \cdot a_n \\
&+ 
\sum_{i=0}^{n-1} (-1)^{i+1} \phi(a_0 , \dots , a_{i-1}, a_i a_{i+1} , a_{i+2} , \dots , a_n).
\end{aligned}
\end{equation}
We call the pair $(\HC^\bullet(\As,\N),\delta)$ the \emph{Hochschild complex} of $\As$ with values in $\N$, which is pictured via
\begin{align}
0 \to \N = \HC^0(\As,\N) \stackrel{\delta}{\to}
\HC^1(\As,\N)  \stackrel{\delta}{\to}
\HC^2(\As,\N) \to \dots
\end{align}
Remember that $\delta^2 = 0$. The \emph{Hochschild cohomology} $\HH^\bullet(\As,\N)$ is defined to be the usual graded vector space
\begin{align}
\HH^\bullet(\As,\N) := \frac{\ker \delta}{\im \delta}.
\end{align}
\end{definition}

\begin{remark}
In the case $\N = \As$, where $\As$ is regarded an $\As$-bimodule by algebra multiplication, this space can be given the structure of a super Lie algebra, and the cohomology can be equipped with the structure of a Gerstenhaber algebra. While we do not use this fact here, we want to mention that all further results about group actions and invariants will be compatible with the Lie algebra/Gerstenhaber structures, so any time we talk about morphisms, they can be regarded as morphisms in the category of either super Lie algebras or Gerstenhaber algebras.\\
If one is unfamiliar with these structures, the reader may simply consider everything to take place in the category of $\mathbb{K}$-vector spaces.
\end{remark}

In the case $\As = \N = C^\infty(M)$ of smooth functions on a smooth manifold $M$, which is the one we are most interested in, we can apply analytical methods to the Hochschild cochains if we restrict to ``analytically interesting'' cochains.  Hence, denote by $\HC^\bullet_{\cont}(C^\infty(M))$ the Hochschild complex of continuous cochains with respect to the Fr\'{e}chet to\-po\-lo\-gy on $C^\infty(M)$. Accordingly, denote by $\HH^\bullet_\cont(C^\infty(M))$ the corresponding cohomology.

\begin{remark}
Similarly, one can restrict to \emph{local}, \emph{differential}, and \emph{differential, vanishing on constants} cochains. From \cite[p.413f.]{waldmann} we cite
\begin{align*}
\HC^\bullet_{\ncdiff} (C^\infty(M)) \subset
\HC^\bullet_{\diff} (C^\infty(M)) \subset
\HC^\bullet_{\loc} (C^\infty(M)) \subset
\HC^\bullet_{\cont} (C^\infty(M)),
\end{align*}
and they all have well-defined cohomologies, which have analogous notation.
\end{remark}

\section{Group actions and invariant cohomology}
\label{SectionActionsAndInvariant}
\subsection{Definitions}
We will now define our notions of group actions on the different spaces and according invariant spaces.
Let $G$ be some group acting on both $\As$ and $\N$, in the sense that
\begin{itemize}
\item[1)] the group acts on $\As$ via algebra isomorphisms,
\item[2)] the group acts on $\N$ via isomorphisms with respect to the Abelian group structure of $\N$, and, denoting the actions by $\acts$,
\begin{align}
\label{EquationModuleAction}
g \acts(a \cdot n) = (g \acts a) \cdot (g \acts n), \quad
g \acts(n \cdot a) = (g \acts n) \cdot (g \acts a),
\end{align}
for all $g \in G$, $a \in \As$, $n \in \N$.
\end{itemize}

Specifically in the previously mentioned case $\As = \N = C^\infty(M)$, if we assume the existence of an action on the manifold, desired actions on $\As$ and $\N$ are induced by pullback.

We can lift these actions to the space of Hochschild cochains:

\begin{definition}[Space of invariant Hochschild cochains]
Given actions on $\As$ and $\N$ as above, define for all $n \in \mathbb{N}_0$
\begin{equation}
\begin{aligned}
\acts &: G \times \HC^n(\As,\N) \to \HC^n(\As,\N) \\
(g \acts \phi) (a_1, &\dots, a_n) := g \acts (\phi(g^{-1} \acts a_1, \dots, g^{-1} \acts a_n)).
\end{aligned}
\end{equation}

We define the \emph{space of invariant Hochschild cochains} via
\begin{align}
\HC_G^\bullet(\As,\N) :&= \left( \HC^\bullet(\As,\N) \right)^G =
\{\phi \in \HC^\bullet \mid g \acts \phi = \phi  \quad \forall g \in G\}.
\end{align}
\end{definition} 
The Hochschild differential commutes with the group action and as such, we can restrict the Hochschild differential to a map
\begin{align}
\delta : \HC_G^\bullet(\As,\N) \to \HC^\bullet_G(\As,\N).
\end{align}
Hence, the space of invariant cochains inherits the structure of a complex, and we can define the associated cohomology:

\begin{definition}[Invariant Hochschild complex]
The tuple $(\HC_G^\bullet (\mathcal{A},\N),\delta)$ is called the \emph{invariant Hochschild complex} of $\mathcal{A}$ and $\N$, equivalently pictured as the sequence
\begin{align}
0 \to \N^G = \HC_G^0(\As,\N) \stackrel{\delta}{\to}
\HC_G^1(\As,\N)  \stackrel{\delta}{\to}
\HC_G^2(\As,\N) \to \dots
\end{align}
One then declares the \emph{cohomology of invariant cochains} $\HH_G^\bullet(\As,\N)$ to be the graded vector space defined by
\begin{align}
\HH_G^\bullet(\As,\N) := \frac{\ker \delta \att _{\HC_G^\bullet(\As,\N)}}{\im \delta\att _{\HC_G^\bullet(\As,\N)}}.
\end{align}
\end{definition}

Like with standard Hochschild cohomology, for the case $\As = \N = C^\infty(M)$ denote the invariant complexes with analytical properties (continuous, local etc.) and the respective cohomologies with the according subscript, e.g. 
\begin{align}
\HC_{G,\cont}^\bullet(C^\infty(M)), \, \,
\HH_{G,\loc}^\bullet(C^\infty(M))
\end{align} and so on, whenever the action is compatible with the respective property.

One can also consider invariance on the level of \emph{cohomology} rather than on the cochains: The action on the cochains canonically descends to one on the equivalence classes, so one may define the space of invariant \emph{classes}
\begin{align}
\HH^\bullet(\As,\N)^G := \{ [\phi] \in \HH^\bullet(\As,\N) \mid g \acts [\phi] = [\phi] \quad \forall g \in G\}.
\end{align}
Analogous spaces can be defined in the case $\N = \As = C^\infty(M)$ for the analytical subcomplexes.

Note the difference between $\HH^\bullet_G(\As,\N)$ and $\HH^\bullet(\As,\N)^G$: when one wants to limit one's framework to invariant cochains, e.g. when considering invariant deformations in deformation theory, the interesting space to consider is the cohomology of invariant cochains $\HH^\bullet_G(\As,\N)$, as we will see in Section \ref{SectionLowOrders}.

 In general, however, we will see that this space cannot easily be related to the original Hochschild cohomology, hence, one is opening a whole new can of cohomological worms. The space of invariant classes $\HH^\bullet(\As,\N)^G$, in contrast, is simply a subspace of the original cohomology.

The natural question arises whether the two notions of invariance can be related. As coboundaries in $\HC^\bullet_G(\As,\N)$ can also be considered coboundaries in $\HC^\bullet(\As,\N)$, there is a well-defined map 
\begin{align}
\iota : \HH_G^\bullet(\As,\N) \to \HH^\bullet(\As,\N)^G, [\phi] \mapsto [\phi].
\end{align}
The main topic of this document will be proving that this map is an isomorphism in the case where we assume $\As = \N = C^\infty(M)$, proper actions, and a restriction to continuous cochains.

\section{Interpretation of the lower order cohomology spaces}
\label{SectionLowOrders}
As with non-invariant Hochschild cohomology, it is possible to give lower orders of invariant Hochschild cohomology an interpretation. One quickly finds 
\begin{align*}
\HH^0_G(\As,\N) &= Z(\N)^G , \\
\HH^1_G(\As,\N) &= \Der(\As,\N)^G/ \InnDer(\As,\N)^G
\end{align*}
in the lowest orders, where $Z$, $\Der$ and $\InnDer$ respectively denote the center, derivations and inner derivations of $N$ with respect to the $(\As,\As)$-bimodule structure, and the superscript $G$ denotes restricting to invariant objects. In the context of deformation theory, there are also direct interpretations for the orders 2 and 3, which we will translate into the invariant context in the following.

We quickly recall a few necessary definitions of deformation theory, which can be looked up in more detail in \cite[Chapter 6]{waldmann}. Denote the multiplication on $\As$ by $\mu_0 : \As \otimes \As \to \As$, and write $\As \form$ for the algebra of formal power series in $\As$ in a parameter $\lambda$. In the following we set $\HC^\bullet(\As) := \HC^\bullet(\As,\As)$.

\begin{definition}
A \emph{formal deformation} of the algebra $\As$ is an associative map $\mu : \As \form \otimes \As \form \to \As \form$ which can be written in the form
\begin{align}
\mu = \mu_0 + \lambda \mu_1 + \lambda^2 \mu_2 + \dots
\end{align}
with maps $\mu_i \in \HC^2(\As)$. 
A \emph{formal deformation up to order $k \in \mathbb{N}$} is defined analogously replacing $\As \form$ by $\As \form / \langle \lambda^{k+1} \rangle$.

Two formal deformations $\mu, \tilde \mu$ of the same algebra are \emph{equivalent} if there exists an algebra isomorphism $\Phi : (\As \form, \mu) \to (\As \form, \tilde \mu)$ which is the identity in the zeroth order. Such a $\Phi$ is called an \emph{equivalence transformation}
\end{definition}

Recall also the existence of the \emph{Gerstenhaber bracket} (see \cite{gerstenhaber1} for the original work on this), 
\begin{align}
[\cdot,\cdot] : \HC^\bullet(\As) \otimes \HC ^\bullet(\As) \to \HC^\bullet(\As),
\end{align} 
making the Hochschild complex into a Lie superalgebra (if one defines the grading to give $\phi \in \HC^i(\As,\As)$ the degree $i-1$). 


We do not want to give the lengthy, explicit formula for this bracket here, but let us remark that the Gerstenhaber bracket is a linear combination of partial compositions of its arguments. From this, one can derive that the bracket is $G$-equivariant, i.e. for all $\phi, \psi \in \HC^\bullet(\As)$:
\begin{align}
g \acts [\phi, \psi] = [g \acts \psi, g \acts \phi].
\end{align}

Now, let us describe the obstructions given by the spaces $\HH^2_G(\As)$ and $\HH^3_G(\As)$. For this, we consider the well-known statements about the non-invariant Hochschild cohomology, see for example \cite[p.402ff.]{waldmann}, and formulate them for the invariant setting. The proofs are immediate adaptations of the proofs for the non-invariant statements.

\begin{proposition}[Obstructions in $\HH_G^3(\As)$]
\label{PropositionObstruction1}
Denote by 
\begin{align}
\mu^{(k)} = \mu_0 + \dots + \lambda^k \mu_k
\end{align} 
an invariant formal deformation of $\mu_0$ up to order $k$, so $\mu_i \in \HC_G^2(\As,\As)$ for all $i = 1, \dots, k$. Then
\begin{align}
R_{k+1} = -\frac{1}{2} \sum_{l=1}^k [\mu_l , \mu_{k+1-l}]
\end{align}
is an invariant Hochschild 3-cocycle, and  $\mu^{(k)}$ can be continued to an invariant associative deformation of order $k+1$ if and only if $R_{k+1} = \delta \mu_{k+1}$ for some $\mu_{k+1} \in \HC_G^2(\As,\As)$. In this case $\mu^{(k+1)} := \mu^{(k)}  + \lambda^{k+1} \mu_{k+1}$ yields such a continuation.
\end{proposition}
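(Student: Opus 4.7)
The plan is to reduce everything to the well-known non-invariant obstruction result (cited in the excerpt from Waldmann) and then verify that every step remains inside the invariant subcomplex. The key structural observation is that the Gerstenhaber bracket is $G$-equivariant, and that the Hochschild differential has the form $\delta \phi = -[\phi,\mu_0]$ with $\mu_0$ itself $G$-invariant (since $G$ acts by algebra automorphisms, so $g \acts \mu_0 = \mu_0$). Consequently both the bracket and $\delta$ restrict to $\HC^\bullet_G(\As)$, and the usual Lie-superalgebra manipulations never leave the invariant complex.

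First I would check that $R_{k+1}$ is invariant. Since each $\mu_l$ with $l=1,\dots,k$ is invariant by hypothesis and the bracket is $G$-equivariant, each summand $[\mu_l,\mu_{k+1-l}]$ lies in $\HC^3_G(\As)$, and hence so does $R_{k+1}$. Next I would verify that $R_{k+1}$ is a cocycle. This is exactly the standard computation from the non-invariant setting: $\mu^{(k)}$ being associative up to order $k$ means $[\mu^{(k)},\mu^{(k)}] \equiv 0 \pmod{\lambda^{k+1}}$, and the graded Jacobi identity $[\mu^{(k)},[\mu^{(k)},\mu^{(k)}]] = 0$ then forces, at order $\lambda^{k+1}$, the relation $[\mu_0, R_{k+1}] = 0$, i.e.\ $\delta R_{k+1} = 0$.

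For the continuation criterion, I would write $\mu^{(k+1)} := \mu^{(k)} + \lambda^{k+1} \mu_{k+1}$ and expand $[\mu^{(k+1)},\mu^{(k+1)}]$. The $\lambda^{k+1}$-coefficient collects to $2[\mu_0,\mu_{k+1}] + \sum_{l=1}^{k}[\mu_l,\mu_{k+1-l}]$, so vanishing of this term (equivalently, associativity up to order $k+1$) is exactly $R_{k+1} = \delta \mu_{k+1}$. If such an invariant $\mu_{k+1} \in \HC^2_G(\As)$ exists, then $\mu^{(k+1)}$ is an invariant associative deformation since every coefficient is invariant. Conversely, any invariant continuation must have its $(k+1)$st coefficient $\mu_{k+1}$ invariant, and by the non-invariant identity above this coefficient satisfies $R_{k+1} = \delta \mu_{k+1}$.

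The proposition contains no real technical obstacle: it is essentially a transcription of the standard obstruction calculus, with $G$-equivariance of $[\,\cdot\,,\,\cdot\,]$ and $\delta$ supplying the closure of the invariant subcomplex under all operations used. The conceptually important point, though not a proof difficulty, is that the space controlling obstructions is $\HH^3_G(\As)$ and \emph{not} $\HH^3(\As)$: an invariant cocycle $R_{k+1}$ could in principle become exact after enlarging to the full complex while remaining a nontrivial class in $\HH^3_G(\As)$, in which case a non-invariant continuation exists but no $G$-invariant one does. This is precisely the subtlety that the main theorem comparing $\HH^\bullet_G$ and $\HH^\bullet(\cdot)^G$ (in Section \ref{SectionProperHKR}) is designed to address.
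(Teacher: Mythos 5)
Your proposal is correct and follows essentially the same route as the paper, which simply derives the statement as a straightforward corollary of the non-invariant obstruction result (Proposition 6.2.19 in Waldmann), with the only additional input being that the Gerstenhaber bracket and $\delta$ are $G$-equivariant and hence preserve the invariant subcomplex. Your write-up just makes explicit the bookkeeping that the paper leaves to the reader.
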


\begin{proposition}[Obstructions in $\HH^2_G(\As)$] 
\label{PropositionObstruction2} Let $\mathbb{K}$ be of characteristic zero. Given two invariant formal deformations $\mu, \tilde \mu$ which are identical up to order $k$, their difference $\mu_{k+1} - \tilde \mu_{k+1}$ is an invariant cocycle. \\
Furthermore, there exists an equivalence transformation up to order $k+1$ of the form $S = \exp( \lambda^{k+1} [T_{k+1}, \cdot])$ with an invariant $T_{k+1} \in \HC_G^1(\As,\As)$ if and only if $\mu_{k+1} - \tilde \mu_{k+1} = \delta \tilde{T}_{k+1}$ for some $\tilde T_{k+1} \in \HC_G^1(\As,\As)$. In this case both operators $\tilde T_{k+1}, T_{k+1}$ can be chosen to be identical.
\end{proposition}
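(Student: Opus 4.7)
My plan is to reduce the statement to the classical non-invariant version (Waldmann, Proposition 6.2.19, or the parallel statement therein) and then propagate invariance using the $G$-equivariance of the Gerstenhaber bracket recorded in the preceding discussion.

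First, because $\mu$ and $\tilde\mu$ are invariant deformations, each of their coefficients $\mu_i, \tilde\mu_i$ lies in $\HC^2_G(\As)$, so the difference $\mu_{k+1} - \tilde\mu_{k+1}$ is itself invariant. The non-invariant version of the proposition gives that this difference is a Hochschild $2$-cocycle; combined with invariance, it is a cocycle in the invariant subcomplex $\HC^\bullet_G(\As)$. So the first sentence of the proposition is immediate from Waldmann's result plus closure of invariant cochains under $\delta$.

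For the biconditional, I would invoke the classical statement (again without any invariance assumption) that an equivalence transformation of the form $S = \exp(\lambda^{k+1}[T_{k+1}, \cdot\,])$ exists up to order $k+1$ if and only if $\mu_{k+1} - \tilde\mu_{k+1} = \delta T_{k+1}$, with the \emph{same} $T_{k+1}$ appearing on both sides; this identification of operators is exactly what licenses the concluding sentence $\tilde T_{k+1} = T_{k+1}$. The only genuinely new point is to verify that invariance can be transported across the equivalence: if the primitive $\tilde T_{k+1} \in \HC^1_G(\As)$ is chosen invariant, then by the equivariance identity $g \acts [\phi,\psi] = \pm[g\acts\phi, g\acts\psi]$ recorded above, the inner derivation $[\tilde T_{k+1}, \cdot\,]$ preserves the invariant subspace, and so does its exponential. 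Conversely, if $T_{k+1}$ is given invariant, the non-invariant theorem supplies $\mu_{k+1}-\tilde\mu_{k+1} = \delta T_{k+1}$, and we simply take $\tilde T_{k+1} := T_{k+1}$. No convergence issue arises, since modulo $\lambda^{k+2}$ one has $\exp(\lambda^{k+1}[T_{k+1},\cdot\,]) \equiv \id + \lambda^{k+1}[T_{k+1},\cdot\,]$.

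I do not expect a serious obstacle. The substantive content is entirely carried by the non-invariant deformation-theoretic result; the invariant version is a bookkeeping exercise in tracking the $G$-action through the standard argument, which goes through cleanly because $G$-equivariance of the Gerstenhaber bracket keeps the invariant subcomplex stable under every operation that appears (Lie bracket, Hochschild differential, and exponentiation).
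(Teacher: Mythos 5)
Your proposal is correct and follows essentially the same route as the paper: both reduce the cocycle statement to the non-invariant result in Waldmann and obtain the coboundary equivalence by expanding $\tilde \mu = \exp(\lambda^{k+1}[T_{k+1},\cdot])\,\mu$ in order $\lambda^{k+1}$, with invariance of $\tilde T_{k+1} = T_{k+1}$ being immediate from the identification of the two operators. The paper's proof is just a terser version of your bookkeeping argument.
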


\begin{remark}
Note that the formulation of \ref{PropositionObstruction2} in \cite{waldmann} lacks the assumption of characteristic zero, which is clearly necessary for the exponential function to be well-defined. For a discussion of obstructions in nonzero characteristic, which is a little more involved, we direct the reader to \cite{gerstenhaber2}.
\end{remark}

\begin{remark}
Note that the above proposition does not classify arbitrary equivalences up to order $k + 1$, but only those with this specific form. However, in the case $k = 0$, where we consider \emph{infinitesimal} deformations, every equivalence up to order $1$ is of the form $\exp(\lambda [T_1,\cdot])$, and the only additional requirement is that $T_1$ be invariant. 

This proposition also lays out a deeper understanding for the difference of the two notions of invariance of Hochschild cohomology: In the case
$\HH^2(\As)^G = 0$, all invariant deformations are equivalent, but not necessarily using invariant equivalences. If, however, $\HH^2_G(\As) = 0$, all invariant infinitesimal deformations are equivalent with invariant equivalences. 
\end{remark}

\section{Results for proper actions}
\label{SectionProperHKR}
\subsection{Injectivity}
Let us now restrict to the case $\N = \As = C^\infty(M)$ of smooth functions on a smooth manifold $M$, where the bimodule structure is given by ordinary multiplication of functions. Let $G$ be a Lie group acting smoothly on $M$. By pullback, this induces an action of $G$ on $C^\infty(M)$, which in turn induces an action on $\HC^\bullet(C^\infty(M))$ as discussed in Section \ref{SectionActionsAndInvariant}. Recall that an action $\acts : G \times M \to M$ on a manifold $M$ is called \emph{proper} if the map 
\begin{align}
\bar \acts : G \times M \to M \times M, \, (g,m) \mapsto (g \acts m,m),
\end{align}
is \emph{proper}, meaning preimages of compact sets are compact under $\bar \acts$. This includes a large class of actions, e.g. actions of compact groups or also the natural action of a Lie group $G$ on a homogeneous space $G/H$ whenever $H$ is a compact Lie subgroup. On manifolds, properness of an action is equivalent to the following statement: for any two convergent sequences $\{x_i\}_{i \in \mathbb{N}}$ and $\{g_i \acts x_i \}_{i \in \mathbb{N}}$, the sequence  $\{g_n\}_{n \in \mathbb{N}}$ has a convergent subsequence (see for example \cite[p.59]{ortega}).

 We will explicitly construct averaging operators for actions of this kind on the space of Hochschild cochains. To construct this operator we will use partitions of unity (see for example \cite[p.13]{ortega}. Recall that for every open cover of a smooth manifold, there exists a smooth partition of unity \emph{with compact support} subordinate to it. 
We further require a $G$-invariant analogue:

\begin{proposition}[Existence of $G$-invariant partitions of unity] \cite[p.61]{ortega}
\label{PropositionInvariantPartition}
Let $\acts : G \times M \to M$ be a proper, smooth action of a Lie group $G$ on a manifold $M$, and let $\{O_\alpha\}_{\alpha \in I}$ be an open cover of $M$ by $G$-invariant subsets. Then there exists a subordinate partition of unity $\{\chi_n\}_{n \in \mathbb{N}}$ consisting of $G$-invariant functions $\chi_n \in C^\infty(M)^G$.
\end{proposition}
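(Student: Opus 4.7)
My plan is to reduce the construction to the familiar non-invariant case by exploiting the slice theorem for proper actions together with averaging over compact stabilizers. Since $G$ acts properly, every stabilizer $G_x$ is compact, and the slice theorem (Palais) produces around every orbit a $G$-invariant tubular neighbourhood of the form $G \times_{G_x} S_x$, where $S_x$ is a slice. Moreover, the quotient $M/G$ is Hausdorff for a proper action and, assuming $M$ is second countable (as is standard for manifolds), $M/G$ inherits second countability and is therefore paracompact.

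First I would refine the given cover. For each $x \in M$, the slice theorem provides an invariant tube $T_x$ which, after shrinking, satisfies $T_x \subset O_{\alpha(x)}$ for some $\alpha(x) \in I$. Pushing the $T_x$ down to the orbit projection $\pi : M \to M/G$ yields an open cover of $M/G$, and paracompactness of $M/G$ then produces a locally finite refinement $\{W_n\}_{n \in \mathbb{N}}$ of it. The lifts $U_n := \pi^{-1}(W_n)$ form a $G$-invariant, locally finite open refinement of $\{O_\alpha\}$, each sitting inside some tube $G \times_{H_n} S_n$ with $H_n$ compact, and each $W_n$ relatively compact in $M/G$.

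Next, for each $n$, I would construct a $G$-invariant smooth function $\psi_n$ supported in $U_n$ and strictly positive on $\pi^{-1}(\overline{W_n})$. Starting from a compactly supported bump function on $S_n$ with the appropriate support, averaging over the compact stabilizer $H_n$ against its normalised Haar measure yields an $H_n$-invariant bump function on $S_n$, which the tube identification $G \times_{H_n} S_n$ extends uniquely to a $G$-invariant smooth function on the tube. Extending by zero outside the tube gives the required $\psi_n \in C^\infty(M)^G$ with $\supp \psi_n \subset U_n$. Since the $\{U_n\}$ are locally finite, $\Psi := \sum_n \psi_n$ is a well-defined, smooth, $G$-invariant function, and it is strictly positive on $M$ because the $U_n$ cover $M$ and each $\psi_n$ is positive on the closure of its target piece. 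Setting $\chi_n := \psi_n / \Psi$ produces the desired $G$-invariant partition of unity subordinate to $\{O_\alpha\}$.

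The main obstacle is the initial reduction rather than the averaging: one must invoke both the slice theorem for proper actions and the paracompactness of $M/G$, both of which rely on properness in non-trivial ways (compactness of stabilizers, Hausdorffness of $M/G$, and the existence of slices). Once these tools are available, the averaging step over the compact stabilizer is a routine application of Haar integration, and the assembly into a global partition of unity is formally parallel to the classical non-invariant argument, with the $U_n$ playing the role of an ordinary locally finite refinement.
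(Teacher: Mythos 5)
The paper does not actually prove this proposition --- it is imported verbatim from Ortega--Ratiu \cite[p.61]{ortega} as a black box --- so there is no in-paper argument to compare against. Your reconstruction is essentially the standard proof found in that reference: slice theorem for proper actions, paracompactness of the Hausdorff, second countable, locally compact quotient $M/G$, a locally finite invariant refinement obtained by lifting a refinement from $M/G$, Haar averaging over the compact stabilizer to produce an invariant bump on each tube, extension by zero (using that the $G$-saturation of a compact set is closed for a proper action), and normalization. All the load-bearing steps are sound. Note that this route is genuinely necessary here: the seemingly more economical alternative of averaging an ordinary compactly supported partition of unity over $G$ (as in Lemma \ref{LemmaAveraging}) and normalizing does not work on its own, because the averaged family need not be locally finite --- this is exactly the obstruction the paper circumvents in the proof of Proposition \ref{PropInjectiveIota} by invoking the present proposition, so that shortcut would be circular.

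One small internal inconsistency in your write-up: you ask for $\supp \psi_n \subset U_n = \pi^{-1}(W_n)$ while simultaneously requiring $\psi_n > 0$ on $\pi^{-1}(\overline{W_n})$; since the support is closed, positivity on $\pi^{-1}(\overline{W_n})$ forces the support to contain points outside $U_n$ whenever $\overline{W_n} \not\subset W_n$. The standard repair is to apply the shrinking lemma on the paracompact space $M/G$ first, obtaining open sets $W_n'$ with $\overline{W_n'} \subset W_n$ still covering $M/G$, and then to demand $\psi_n > 0$ on $\pi^{-1}(\overline{W_n'})$ and $\supp \psi_n \subset U_n$. With that adjustment the positivity of $\Psi = \sum_n \psi_n$ and the rest of the argument go through as you describe.
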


Note that from this proposition one does not necessarily gain a partition of unity with functions in $C^\infty_0(M)$, as compact support is in general not compatible with $G$-invariance. Thus the compactly supported partitions and the invariant partitions should be viewed as separate results, and indeed, we will need both in the following.
First, we prove there is a way to average these kinds of cochains:

\begin{lemma}
\label{LemmaAveraging}
Let $G$ be a Lie group acting smoothly and properly on the smooth manifold $M$. Given a left invariant volume form $\Omega \in \Gamma^\infty(\Lambda^{\dim G} T^* G)$, a compactly supported function $\xi \in C^\infty_0(M)$ and a continuous cochain $\beta \in \HC^{k}_\cont(C^\infty(M))$ for $k \in \mathbb{N}_0$, the formula
\begin{equation}
\begin{aligned}
&(\xi \cdot \beta)^\av : C^\infty(M)^{\otimes k} \times M \to \mathbb{R}, \\ 
&(\xi \cdot \beta)^\av(f_1,\dots,f_{k})(p) := 
\int_G (g \acts (\xi \cdot \beta)) (f_1, \dots, f_{k})(p) \Omega(g)  
\end{aligned}
\end{equation}
defines an element $(\xi \cdot \beta)^\av \in \HC^k_{G,\cont}(C^\infty(M))$. This averaging is linear and commutes with the differential in the following sense:
\begin{align}
(\xi \cdot \delta \beta)^\av = \delta(( \xi \cdot \beta)^\av) \quad \forall \beta \in \HC^\bullet_\cont(C^\infty(M)).
\end{align}
\end{lemma}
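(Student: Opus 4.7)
The plan is to verify four things in sequence: (i) for each $p$ the $g$-integrand has compact support, locally uniformly in $p$; (ii) the integral defines an element of $C^\infty(M)$ depending continuously on $f_1,\dots,f_k$ in the Fréchet topology; (iii) the resulting cochain is $G$-invariant; (iv) averaging commutes with the Hochschild differential.

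For step (i), I would unpack the group action as
\[
(g \acts (\xi \cdot \beta))(f_1, \dots, f_k)(p) = \xi(g^{-1} \acts p) \cdot \beta(g^{-1} \acts f_1, \dots, g^{-1} \acts f_k)(g^{-1} \acts p),
\]
which vanishes unless $g^{-1} \acts p$ lies in the compact set $K := \supp \xi$. For fixed $p$, the set $S_p := \{g \in G : g^{-1} \acts p \in K\}$ is exactly the projection to $G$ of $\bar{\acts}^{-1}(\{p\} \times K)$, and hence compact by properness. More strongly, for any compact $K' \subset M$, the set $S_{K'} := \bigcup_{p \in K'} S_p$ equals the projection of the compact set $\bar{\acts}^{-1}(K' \times K)$ to $G$, and is therefore compact as well. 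This step is where both properness of the action and compactness of $\supp \xi$ are essential.

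For step (ii), smoothness of the action makes $g \mapsto g^{-1} \acts f_i$ a smooth map from $G$ into $C^\infty(M)$ with its Fréchet topology, and continuity of $\beta$ transfers this to smoothness of $g \mapsto \beta(g^{-1} \acts f_1, \dots, g^{-1} \acts f_k)$ as a map into $C^\infty(M)$. Combined with smoothness of $\xi$ and of the action, the integrand is jointly smooth in $(g, p) \in G \times M$. Given the local uniform $g$-support from step (i), classical differentiation-under-the-integral arguments then yield smoothness of $(\xi \cdot \beta)^{\av}(f_1, \dots, f_k)$ in $p$. Fréchet-continuity in each $f_i$ follows from the same estimates, as varying $f_i$ changes the integrand continuously (by continuity of $\beta$) uniformly over the compact $g$-support. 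I expect this to be the main technical obstacle, since interchanging derivatives with the integral in the Fréchet setting requires careful bookkeeping of seminorms and the local uniform compact support provided by (i).

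For step (iii), the pointwise $G$-action commutes with the integral (it is just a continuous linear operator applied at each point), so using $h \acts (g \acts \alpha) = (hg) \acts \alpha$ one obtains
\[
h \acts (\xi \cdot \beta)^{\av} = \int_G (hg) \acts (\xi \cdot \beta) \, \Omega(g),
\]
and left-invariance of $\Omega$ (i.e.\ $\int_G f(hg)\,\Omega(g) = \int_G f(g)\,\Omega(g)$) gives the right-hand side equal to $(\xi \cdot \beta)^{\av}$. Finally, for step (iv) the key observation is that since $C^\infty(M)$ is commutative, $\xi$ is central, and expanding the Hochschild formula gives $\delta(\xi \cdot \beta) = \xi \cdot \delta\beta$ directly. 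Because $\delta$ is linear and $G$-equivariant (the action is by algebra automorphisms), it passes through both the integral and the $G$-action, yielding
\[
(\xi \cdot \delta\beta)^{\av} = (\delta(\xi \cdot \beta))^{\av} = \delta((\xi \cdot \beta)^{\av}).
\]
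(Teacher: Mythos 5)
Your proposal is correct and follows essentially the same route as the paper: compactness of the effective integration domain via properness of $\bar\acts$ and $\supp\xi$, differentiation under the integral for smoothness, seminorm estimates over the compact $g$-support for Fr\'echet continuity, left-invariance of $\Omega$ for $G$-invariance, and linearity plus equivariance of $\delta$ (with $\delta(\xi\cdot\beta)=\xi\cdot\delta\beta$ by commutativity) for compatibility with the differential. You are in fact somewhat more explicit than the paper on the invariance and differential steps, which it dismisses as straightforward, while the seminorm bookkeeping you flag as the main obstacle is exactly what the paper relegates to its appendix.
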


\begin{proof}
To show that this map is well-defined it suffices to show that the integrand is zero outside of a compact domain. Restricting to an arbitrary open subset $U \subset M$ with compact closure $\overline{U}$, we want to show that the set $G_{U,\xi} \subset G$ of group elements which can have non-vanishing contribution to the integration is compact. Analyze this set:
\begin{align*}
G_{U,\xi} &= \left \lbrace g \in G \mid \exists p \in \overline{U} :  g^{-1} \acts p \in \supp \, \xi \right \rbrace 
\\ &=
\left \lbrace g \in G \mid \exists p \in \overline{U} :  \overline{\acts}(g^{-1},p) \in \supp \, \xi \times \overline{U} \right \rbrace 
\\ &=
\pr_G \left( \left( \overline{\acts} \circ (\inv \times \id) \right)^{-1} (\supp \, \xi \times \overline{U}) \right).
\end{align*}
Note that $\xi$ has compact support, $\inv : G \to G, g \mapsto g^{-1}$ is a homeomorphism, and $\overline{\acts}$ is a proper map, so the argument of the projection $\pr_G : G \times M \to G$ in the above equation is a compact set and the image $G_{U,\xi}$ of the projection is also a compact set. This implies for every $p \in U$
\begin{align}
(\xi \cdot \beta)^\av(f_1,\dots,f_{k-1})(p) = \int_{G_{U,\xi}} (g \acts (\xi \cdot \beta)) (f_1,\dots,f_{k-1})(p) \Omega(g),
\end{align}
wherefore the integral is well-defined as an integral of a smooth function over a compact set. 
Note also that for continuous cochains $\beta$, this averaging still yields con\-tin\-u\-ous co\-chains, which is treated in Lemma \ref{LemmaContinuousAverage}. Furthermore, let us show that the map $(\xi \cdot \beta)^\av(f_1,\dots,f_k)$ is a smooth function for all $f_i \in C^\infty(M)$: Since the unaveraged $(\xi \cdot \beta)(f_1,\dots,f_k)$ is a smooth function, this is implied if the order of integration in the parameter $g \in G$ and differentiation in the parameter $p \in M$ can be reversed. Since around every point we can restrict the integration to a compact domain, the function is continuous in $g$ and smooth in $p$. Then the smoothness follows from the Leibniz integral rule for general measure theoretic spaces.
The facts that this averaging is linear and commutes with the Hochschild differential are straightforward calculations. This concludes the proposition.
\end{proof}

We now proceed to proving the injectivity:

\begin{proposition}
\label{PropInjectiveIota}
For a Lie group $G$ acting properly and smoothly on a manifold $M$, taking the induced actions on $\HC^\bullet(C^\infty(M))$ and $\HH^\bullet(C^\infty(M))$, one finds that the map
\begin{align}
\iota  : \HH^\bullet_{G,\cont}(C^\infty(M)) \to \HH_\cont^\bullet(C^\infty(M))^G
\end{align}
is injective.
\end{proposition}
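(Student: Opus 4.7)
The plan is as follows. Suppose $\phi \in \HC^k_{G,\cont}(C^\infty(M))$ is an invariant cocycle with $[\phi] \in \ker \iota$, so $\phi = \delta \psi$ for some continuous, but a priori \emph{non-invariant}, cochain $\psi \in \HC^{k-1}_\cont(C^\infty(M))$. The goal is to produce an invariant primitive $\tilde\psi$ by averaging $\psi$ via the operator of Lemma \ref{LemmaAveraging}. The naive global average $\int_G g \acts \psi\, \Omega(g)$ need not converge when $G$ is non-compact -- this is precisely why Lemma \ref{LemmaAveraging} inserts a compactly supported cutoff $\xi$ before integrating. The whole problem is therefore to decompose $\psi$ via a family of such cutoffs in a way that the pieces recombine into a genuine primitive of $\phi$.

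For Step 1, properness makes $M/G$ Hausdorff and paracompact and (via Palais' slice theorem) ensures that every orbit sits inside a $G$-invariant tubular neighborhood with relatively compact quotient. Combining this with Proposition \ref{PropositionInvariantPartition}, I would build a locally finite $G$-invariant cover $\{O_\alpha\}$ of $M$ by such tubes, together with compactly supported $\xi_\alpha \in C^\infty_0(M)$ with $\supp \xi_\alpha \subset O_\alpha$, chosen so that
\begin{equation*}
\tilde\chi_\alpha(p) := \int_G (g \acts \xi_\alpha)(p)\,\Omega(g)
\end{equation*}
is a $G$-invariant partition of unity subordinate to $\{O_\alpha\}$. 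For Step 2, set
\begin{equation*}
\tilde\psi := \sum_\alpha (\xi_\alpha \cdot \psi)^\av.
\end{equation*}
The support of $(\xi_\alpha \cdot \psi)^\av$ in its $M$-argument is contained in $G \cdot \supp \xi_\alpha = \supp \tilde\chi_\alpha$, so the local finiteness of $\{\tilde\chi_\alpha\}$ makes the sum pointwise finite and well-defined; Lemma \ref{LemmaAveraging} ensures every summand is invariant and continuous, hence $\tilde\psi \in \HC^{k-1}_{G,\cont}(C^\infty(M))$.

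Step 3 is the payoff. Applying $\delta$ and using the commutation relation of Lemma \ref{LemmaAveraging} termwise,
\begin{equation*}
\delta \tilde\psi = \sum_\alpha (\xi_\alpha \cdot \delta \psi)^\av = \sum_\alpha (\xi_\alpha \cdot \phi)^\av.
\end{equation*}
Here invariance of $\phi$ is decisive: unfolding the group action on cochains, using \eqref{EquationModuleAction}, and invoking $g \acts \phi = \phi$, the integrand collapses to $(g \acts \xi_\alpha)\cdot \phi$, so that
\begin{equation*}
(\xi_\alpha \cdot \phi)^\av = \tilde\chi_\alpha \cdot \phi.
\end{equation*}
Summing over $\alpha$ and using $\sum_\alpha \tilde\chi_\alpha = 1$ yields $\delta \tilde\psi = \phi$, which shows $[\phi] = 0$ in $\HH^k_{G,\cont}(C^\infty(M))$ and thus proves injectivity.

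I expect the real work to be concentrated in Step 1: producing the paired family $(\xi_\alpha, \tilde\chi_\alpha)$ in which the $\xi_\alpha$ are compactly supported, their $G$-averages form an invariant partition of unity, and the saturations $G \cdot \supp \xi_\alpha$ remain locally finite in $M$. Neither Proposition \ref{PropositionInvariantPartition} alone (its invariant partitions are not compactly supported) nor an ordinary compactly supported partition (whose $G$-saturations generally fail to be locally finite) suffice on their own, which is exactly the content of the remark in the text that \emph{both} kinds of partitions are needed. Properness enters in an essential way here, through the slice structure that controls the orbit-wise size of supports. Once this construction is available, the rest of the argument is a direct application of the equivariance and linearity of the averaging operator from Lemma \ref{LemmaAveraging}.
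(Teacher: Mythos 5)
Your argument is correct and rests on the same mechanism as the paper's proof: cut $\psi$ down by compactly supported functions, average via Lemma \ref{LemmaAveraging}, use invariance of $\phi$ to pull it out of the integral so that $(\xi \cdot \phi)^\av = \xi^\av \cdot \phi$, and reassemble by a locally finite sum. The difference is in how the reassembly is organized. The paper takes an \emph{arbitrary} compactly supported partition of unity $\{\xi_n\}$, with no control over the saturations $G \cdot \supp \xi_n$, invokes Proposition \ref{PropositionInvariantPartition} as a black box to obtain a separate invariant partition $\{\chi_n\}$ subordinate to the invariant sets $U_n = (\xi_n^\av)^{-1}((0,\infty))$, and uses the $\chi_n$ both to force local finiteness and to normalize at the end by dividing by the positive invariant function $\sum_n \chi_n \cdot \xi_n^\av$ (continuity of the resulting quotient being handled by Lemma \ref{LemmaContinuousLocallyFiniteSum}). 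You instead concentrate all the work in constructing a single family $\{\xi_\alpha\}$ whose averages already form an invariant partition of unity with locally finite saturated supports; this makes your Step 3 cleaner (no division), but it amounts to reproving a strengthened form of Proposition \ref{PropositionInvariantPartition} in which the invariant partition functions are realized as $G$-averages of compactly supported bumps --- the cited proposition does not hand you this, and the exact normalization $\sum_\alpha \xi_\alpha^\av = 1$ requires, e.g., replacing $\xi_\alpha$ by $\xi_\alpha / \sum_\beta \xi_\beta^\av$, which is legitimate because the denominator is invariant and positive, so the average of the quotient is the quotient of the averages. Both routes work; the paper's two-partition-plus-division trick buys the right to cite Proposition \ref{PropositionInvariantPartition} verbatim and to use an arbitrary compactly supported partition, at the modest cost of the final normalization. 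One detail you elide: continuity of the locally finite sum $\tilde\psi$ still needs an argument in the spirit of Lemma \ref{LemmaContinuousLocallyFiniteSum}; continuity of each summand alone does not formally give continuity of the sum.
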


\begin{remark}
We will see that proving this statement comes down to finding a way to globally average elements of $\HC^\bullet_\cont(C^\infty(M))$ in a way that respects the Hochschild differential. If $G$ was compact, this would be quite standard, since we could choose, with the notation from the Lemma \ref{LemmaAveraging}:
\begin{align*}
\phi \mapsto \phi^\av := \int_G (g \acts \phi) \, \Omega(g).
\end{align*}
For non-compact $G$, this integral is in general not well-defined. But as we will see, proper group actions are built specifically in a way that allows for a more sophisticated averaging procedure.
\end{remark}

\begin{proof}[Proof of Proposition \ref{PropInjectiveIota}] 
Injectivity is equivalent to the following: Given an invariant coboundary $\phi \in \HC^{k+1}_{G,\cont}(C^\infty(M))$ with
\begin{align}
\phi = \delta \psi, \hspace{2mm} \psi \in \HC_\cont^{k}(C^\infty(M)),
\end{align}
there must exist an invariant $\tilde \psi \in \HC^{k}_{G,\cont}(C^\infty(M))$ with $\phi = \delta \tilde \psi$. 

First, choose a countable open cover $\{O_n\}_{n \in \mathbb{N}}$ of $M$ with compact closure  $\overline{O_n}$ for all $n \in \mathbb{N}$ and a subordinate partition of unity $\{\xi_n\}_{n \in \mathbb{N}}$ with
\begin{align}
\supp \, \xi_n \subset O_n.
\end{align}
Choose any left invariant volume form so that we can use Lemma \ref{LemmaAveraging} to define the averages of $\xi_n \cdot \psi \in \HC^k(C^\infty(M))$ and $\xi_n \cdot 1 = \xi_n \in C^\infty(M) = \HC^0(C^\infty(M))$, so
\begin{align}
(\xi_n \cdot \psi)^\av, \quad  \xi_n^\av := (\xi_n \cdot 1)^\av.
\end{align}

Furthermore, define the sets $U_n := \left(\xi_n^\av\right)^{-1} ( (0,\infty) )$. They are open and $G$-invariant. Also, since for every $p \in M$ there exists a $\xi_n$ with $\xi_n(p) \neq 0$ and thus $\xi_n^\av(p) \neq 0$, so the $U_n$ cover $M$. Thus by Proposition \ref{PropositionInvariantPartition} there exists a $G$-invariant partition of unity $\{\chi_n\}_{n \in \mathbb{N}}$ subordinate to the $U_n$, satisfying $\supp \, \chi_n \subset U_n$. In particular this means that for every $p \in M$ there exists a $\chi_n$ with $\chi_n(p) \neq 0$ and by construction also $\xi^\av_n(p) \neq 0$.

Using these functions, $\phi = \delta \psi$ implies
\begin{align}
\xi_n \cdot \phi &= \xi_n \cdot \delta \psi = \delta( \xi_n \cdot \psi).
\end{align}
This equation can now be averaged; as $\phi$ is already invariant, and $\delta$ commutes with the averaging integral, it follows that
\begin{align}
 \xi_n^\av \cdot \phi = (\xi_n \cdot \phi)^\av  = (\delta (\xi_n \cdot \psi))^\av = \delta ((\xi_n \cdot \psi)^\av) .
\end{align}
This cannot yet be summed over $n$, as neither side has to be locally finite. However, multiplication with elements $\chi_n$ of the $G$-invariant partition of unity yields a well-defined, locally finite sum:
\begin{align}
\sum_{n \in \mathbb{N}} \chi_n \cdot \xi_n^\av \cdot \phi = \sum_{n \in \mathbb{N}} \delta( \chi_n \cdot (\xi_n \cdot \psi)^\av).
\end{align}
As such, the sum can be interchanged with the linear operator $\delta$, giving the equation
\begin{align}
\phi  \cdot \sum_{n \in \mathbb{N}} \chi_n \cdot \xi_n^\av = \delta \left( \sum_{n \in \mathbb{N}} \chi_n \cdot (\xi_n \cdot \psi)^\av \right).
\end{align}
Now $\sum_{n \in \mathbb{N}} \chi_n \cdot \xi_n^\av > 0$, as for every $p \in M$ there exists an  $n \in \mathbb{N}$ with $\chi_n(p) > 0 $ and $\xi_n(p) > 0$, so $\xi_n^\av(p) > 0$. It follows that
\begin{align}
\phi = \delta \left( \frac{\sum_{n \in \mathbb{N}} \chi_n \cdot (\xi_n \cdot \psi)^\av}{\sum_{n \in \mathbb{N}} \chi_n \cdot \xi_n^\av} \right).
\end{align}
By Lemma \ref{LemmaContinuousLocallyFiniteSum} the cochain $\tilde \psi := \frac{\sum_{n \in \mathbb{N}} \chi_n \cdot (\xi_n \cdot \psi)^\av}{\sum_{n \in \mathbb{N}} \chi_n \cdot \xi_n^\av}$ is continuous. As $\tilde \psi$ now only consists of $G$-invariant functions and cochains, the statement is shown.
\end{proof}

\begin{remark}
The above statement also holds when $\cont$ is replaced by $\loc,\diff$ or $\ncdiff$; one only needs to check that the averaging procedure $\psi \mapsto \psi^\av$ leaves these properties untouched, they then carry through the rest of the construction without problem.
\end{remark}

%
%

\subsection{Surjectivity} For the surjectivity of the natural map $\iota$, one would have to show that every invariant class contains an invariant cochain. In analogy to the proof of injectivity, the intuitive route would be to take an arbitrary cochain from the invariant class and average it to gain an invariant one. However, it is not clear why the averaged cochain is still in the same class. With the notation of the proof for injectivity, one would have to show that
\begin{align}
\int_G (g \acts \xi_n) (\phi - g \acts \phi) \Omega(g)
\end{align}
is a coboundary if the expression $\phi - g \acts \phi$ is a coboundary for all $g \in G$. While it is true that for every $g \in G$ there exists a cochain $\psi_g$ so that $\phi - g \acts \phi = \delta \psi_g$, there is no control over the map $g \mapsto \psi_g$.

We will take another route using the celebrated Hochschild-Kostant-Rosenberg theorem, calculating Hochschild cohomology of the algebra of smooth functions in the non-invariant case; see \cite{HKR} for its original formulation in terms of algebraic varieties. The following version is phrased in terms of smooth manifolds.

We will use some standard notation from differential geometry: If $\pi: E \to M$ is a smooth vector bundle, denote by $\Gamma^\infty(E)$ its smooth sections, and by $\Lambda^k E$ its $k$-th exterior power. For a smooth section $X \in \Gamma^\infty(E)$ and any $p \in M$, we denote by $X_p$ its value at the point $p$. The map $d$ is the usual Cartan differential and $\ih_\alpha$ for some smooth form $\alpha \in \Gamma^\infty(T^*M)$ denotes the interior product map, i.e. contraction with the form $\alpha$. For a smooth map $f \in C^\infty(M)$ and a vector field $X \in \Gamma^\infty(TM)$, the expression $X(f) \in C^\infty(M)$ denotes $X$ acting on $f$ as a derivation in the canonical way. We also denote the space of multivector fields by $\mathfrak{X}^\bullet(M) =
 \bigoplus_{k=0}^\infty \Gamma^\infty(\Lambda^k TM)$.

Let us now recall the Hochschild-Kostant-Rosenberg theorem, whose precise formulation we cite from \cite{waldmann}. Proofs are found in \cite{kontsevich2} for the differential case, \cite{cahen} for the local case, and \cite{nadaud1999continuous} and \cite{pflaum1998continuous} for the continuous case. Before the full result was known for continuous cochains, Connes proved the special case of compact $M$ in \cite{connes1985noncommutative} and Gutt described degree 2 cohomology in \cite{gutt1997some}.

\begin{theorem}[Hochschild-Kostant-Rosenberg (HKR)] \cite[p.417]{waldmann}
\label{TheoremHKR}
Let $M$ be a smooth manifold. Define
\begin{equation}
\label{FormulaHKR}
\begin{aligned}
U :  \mathfrak{X}^\bullet(M)  &\to  \HC^\bullet_{\cont} (C^\infty(M)), \hspace{3mm} X \mapsto U(X),\\
U(X)&(f_1,\dots,f_k) = \frac{1}{k!} \ih_{d f_k} \dots \ih_{d f_1} X.
\end{aligned}
\end{equation}
This induces an isomorphism
\begin{align}
\mathcal{U} : \mathfrak{X}^\bullet(M) \to \mathrm{HH}_{\cont}^\bullet(C^\infty(M)),
\end{align}
where $\cont$ can be replaced with $\loc$, $\diff$, $\ncdiff$.
\end{theorem}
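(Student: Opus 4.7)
My plan would be to prove the HKR theorem in three steps: establish that $U$ lands in Hochschild cocycles (so that $\mathcal{U}$ is well-defined), prove injectivity of the induced map via a symbol construction exploiting commutativity of $C^\infty(M)$, and prove surjectivity by a local-to-global argument.

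For the first step, I would compute $\delta U(X)$ directly. Using $d(f_i f_{i+1}) = f_i \, df_{i+1} + f_{i+1} \, df_i$ together with the antiderivation property of $\ih_\alpha$ on $\Lambda^\bullet T^*M$, each ``product'' term $\ih_{d(f_i f_{i+1})}$ appearing in $\delta U(X)$ splits into two pieces, and antisymmetry of $X$ causes these pieces to pair with the boundary terms $f_0 \cdot U(X)(\dots)$ and $U(X)(\dots) \cdot f_{k+1}$ so that everything cancels. This is straightforward but requires care with signs.

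For injectivity, I would introduce the skew-symmetrization
\begin{align*}
\mathrm{Alt}(\phi)(f_1, \dots, f_k) := \sum_{\tau \in S_k} \operatorname{sgn}(\tau)\, \phi(f_{\tau(1)}, \dots, f_{\tau(k)}).
\end{align*}
Two facts drive the argument. First, using commutativity of $C^\infty(M)$ one checks that $\mathrm{Alt}(\delta \psi) = 0$ for every cochain $\psi$: the ``inner'' terms $\psi(\dots, f_i f_{i+1}, \dots)$ are invariant under the transposition $f_i \leftrightarrow f_{i+1}$, while the two ``boundary'' terms cancel against their own transposes because scalar multiplication is commutative. Second, when $\phi$ is a cocycle, $\mathrm{Alt}(\phi)$ is a multiderivation in each argument (using the cocycle identity), hence defines a multivector field $\sigma(\phi) \in \mathfrak{X}^k(M)$. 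A direct calculation exploiting $\ih_\alpha \ih_\beta = - \ih_\beta \ih_\alpha$ shows $\sigma(U(X)) = X$. Injectivity then follows: if $U(X) = \delta \psi$, then $X = \sigma(U(X)) = \sigma(\delta \psi) = 0$.

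For surjectivity, given a cocycle $\phi$ I would set $X := \sigma(\phi) \in \mathfrak{X}^k(M)$, so that $\phi - U(X)$ is a cocycle with vanishing symbol. The remaining step, and the main obstacle of the entire proof, is to show that every continuous cocycle with vanishing symbol is a coboundary. I would localize via a partition of unity to a coordinate chart $V \cong \mathbb{R}^n$ and on the chart construct a contracting homotopy for the continuous Hochschild complex of $C^\infty(\mathbb{R}^n)$ using the Koszul resolution of $C^\infty(\mathbb{R}^n)$ as a bimodule over itself. The principal difficulty is analytic rather than algebraic: the homotopy, naturally defined on the algebraic tensor product, must be extended continuously to the Fréchet completed tensor product in a way that respects the symbol filtration. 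This extension is precisely the content of the technical arguments in Pflaum's and Nadaud's papers, and it is what distinguishes the continuous HKR theorem from its purely algebraic counterpart.
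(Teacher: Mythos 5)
The paper does not actually prove this theorem: it is quoted from \cite{waldmann}, and the proofs are explicitly delegated to \cite{kontsevich2} (differential case), \cite{cahen} (local case), and \cite{nadaud1999continuous}, \cite{pflaum1998continuous} (continuous case). So there is no in-paper argument to compare against; what you have written is an outline of the standard proof from that literature, and its algebraic portion is sound. Indeed, $U(X)$ is a multiderivation and hence a cocycle (your Leibniz/telescoping cancellation is the right computation); $\mathrm{Alt}\circ\delta=0$ holds because $C^\infty(M)$ is commutative and the bimodule is symmetric; the antisymmetrization of a cocycle is a derivation in each argument and therefore determines a multivector field; and $\mathrm{Alt}(U(X))=k!\,X$, which gives injectivity. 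You also correctly identify that the entire weight of the theorem sits in the final step --- every continuous cocycle with vanishing antisymmetrization is a coboundary --- and you defer that to the cited analytic papers rather than proving it, which is exactly what the paper itself does.

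One substantive caveat on the surjectivity step as you phrase it: ``localize via a partition of unity to a coordinate chart'' is not innocent for \emph{continuous} cochains, because a continuous cochain is not a priori local --- the value $\phi(f_1,\dots,f_k)(p)$ may depend on the behaviour of the $f_i$ far from $p$ --- so cutting off the inputs with bump functions changes the cochain in an uncontrolled way, and the chartwise homotopies cannot simply be glued. The actual proofs in \cite{pflaum1998continuous} and \cite{nadaud1999continuous} (and Connes' computation for compact $M$) instead build a topological Koszul resolution of $C^\infty(M)$ as a module over the completed tensor product $C^\infty(M)\,\widehat{\otimes}\,C^\infty(M)\cong C^\infty(M\times M)$ and compare it with the continuous bar resolution; the reduction to local data happens at the level of the resolution, not by truncating cochains. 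Since you explicitly outsource the analytic content to those papers, this is a flaw in the sketch of the hard step rather than in anything you actually claim to prove, but as literally stated the localization would not go through.
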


\begin{remark}
For factorizing multivector fields $X_1 \wedge \dots \wedge X_n$, the above definition yields
\begin{align}
U(X_1 \wedge \dots \wedge X_n)(f_1,\dots,f_n) = \frac{1}{k!} \sum_{\sigma \in S_k} X_{\sigma(1)} (f_1) \dots X_{\sigma(k)} (f_k),
\end{align}
where $X_i(f_j)$ is just the derivation associated to $X_i$ acting on $f_j$ and $S_k$ denotes the permutation group on $k$ letters. Note that the Serre-Swan theorem of differential geometry implies that this formula fully defines the map on all multivector fields.
\end{remark}

Note that while the proof of injectivity is fairly straightforward, the surjectivity of this map heavily relies on the analytical/topological structure of the Hochschild cochains and is highly technical. To the knowledge of the author, neither a proof nor a counterexample to the HKR theorem for non-continuous Hochschild cohomology has been found.

We can define a group action on the space of multivector fields which corresponds to the action on the Hochschild complex. Define for every $g \in G$ and $X \in \Gamma^\infty(TM)$ the action
\begin{align}
(g \acts X)(f):= g \acts (X(g^{-1} \acts f))
\end{align}
and accordingly actions on higher order multivector fields via
\begin{align}
g \acts (X \wedge Y) := (g \acts X) \wedge (g \acts Y).
\end{align} 
Using the explicit formula of $U$ above, we find
\begin{align}
g \acts U(X) = U(g \acts X)
\end{align}
for all $g \in G$.

\begin{proposition}
\label{PropSurjectiveIota}
For a smooth action of $G$ on $M$, using the induced actions on $\HC^\bullet(C^\infty(M))$ and $\HH^\bullet(C^\infty(M))$, the natural map
\begin{align}
\iota: \HH_{G,\cont}^\bullet(C^\infty(M)) \to \HH_\cont^\bullet(C^\infty(M))^G
\end{align}
is surjective. Here, $\cont$ can be replaced with $\loc,\diff,\ncdiff$.
\end{proposition}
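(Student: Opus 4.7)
The plan is to use the Hochschild-Kostant-Rosenberg isomorphism to transfer the entire problem to the side of multivector fields, where invariance is a purely pointwise notion and easy to handle. Unlike the injectivity proof, surjectivity will not require any averaging construction; the HKR theorem will do all of the analytic heavy lifting, and the $G$-equivariance of the map $U$ noted in the text will handle the rest.

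More precisely, let $[\phi] \in \HH_\cont^\bullet(C^\infty(M))^G$ be an invariant cohomology class. By Theorem \ref{TheoremHKR}, there is a unique multivector field $X \in \mathfrak{X}^\bullet(M)$ with $\mathcal{U}(X) = [\phi]$. Since the explicit formula \eqref{FormulaHKR} shows $g \acts U(Y) = U(g \acts Y)$ for every $Y \in \mathfrak{X}^\bullet(M)$ and every $g \in G$, the induced map $\mathcal{U}$ on cohomology is also $G$-equivariant. Applying $\mathcal{U}$ to $g \acts X$ yields
\begin{align}
\mathcal{U}(g \acts X) = g \acts \mathcal{U}(X) = g \acts [\phi] = [\phi] = \mathcal{U}(X),
\end{align}
and the injectivity of $\mathcal{U}$ forces $g \acts X = X$ for every $g \in G$. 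Hence $X$ is a $G$-invariant multivector field.

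With $X$ in hand, the cochain $U(X) \in \HC^\bullet_\cont(C^\infty(M))$ satisfies $g \acts U(X) = U(g \acts X) = U(X)$, so it is an invariant continuous Hochschild cochain, i.e.\ $U(X) \in \HC^\bullet_{G,\cont}(C^\infty(M))$. Since $U(X)$ is a cocycle (as it represents the class $\mathcal{U}(X)$), it defines a class $[U(X)]_G \in \HH^\bullet_{G,\cont}(C^\infty(M))$, and by construction
\begin{align}
\iota([U(X)]_G) = [U(X)] = \mathcal{U}(X) = [\phi]
\end{align}
in $\HH^\bullet_\cont(C^\infty(M))^G$, which proves surjectivity. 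The same argument goes through verbatim with $\cont$ replaced by $\loc$, $\diff$, or $\ncdiff$ because the HKR theorem and the equivariance of $U$ both respect these analytic subcomplexes. The main obstacle is not really in this argument at all: it is buried in the HKR theorem itself, specifically in the nontrivial surjectivity of $\mathcal{U}$ in the continuous setting, which is what makes it possible to pick a multivector-field representative of an arbitrary invariant class in the first place.
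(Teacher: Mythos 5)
Your proof is correct and follows essentially the same route as the paper: apply the HKR theorem to obtain a multivector field representative, use the $G$-equivariance of $U$ together with the injectivity of $\mathcal{U}$ to conclude the multivector field is invariant, and hence $U(X)$ is an invariant cocycle representing the given class. The only cosmetic difference is that you phrase the equivariance argument directly at the level of cohomology classes, whereas the paper writes out the coboundary terms $\delta(\psi_g + g \acts \xi - \xi)$ explicitly at the cochain level.
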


\begin{proof}
Recall that surjectivity of $\iota$ means that every invariant class contains an invariant cocycle. Given any $[\phi] \in \HH^\bullet(C^\infty(M))^G$, for all $g \in G$ there exists some $\psi_g \in \HC^\bullet(C^\infty(M))$ so that
\begin{align}
g \acts \phi = \phi + \delta(\psi_g).
\end{align} 
By the HKR theorem, there exists a unique multivector field $X \in \mathfrak{X}^\bullet(M)$ and some $\xi \in \HC^\bullet(C^\infty(M))$ with
\begin{align}
U(X) = \phi + \delta \xi.
\end{align}
Now, for any $g \in G$ we find
\begin{equation}
\begin{aligned}
U(g \acts X) 
&= 
g \acts U(X)
= 
g \acts \phi + g \acts \delta \xi 
\\ &=
\phi + \delta (\psi_g + g \acts \xi)
=
U(X) + \delta(\psi_g + g \acts \xi - \xi).
\end{aligned}
\end{equation}
This means that $U(X)$ and $U(g \acts X)$ lie in the same equivalence class, which, by injectivity of the HKR isomorphism $\mathcal{U}$, is only possible if $X = g \acts X$. It follows that $U(X)$ is an invariant cocycle lying in the same equivalence class as $\phi$, which proves surjectivity of $\iota$.
\end{proof} 

To summarize: If we assume a proper group action and restrict to any one of the analytical subcomplexes of $\HH^\bullet(C^\infty(M))$ where the HKR theorem is applicable, we gain isomorphy of the spaces:

\begin{theorem}
\label{TheoremProperIsomorphism}
For a proper, smooth action of a Lie group $G$ on a manifold $M$, taking the induced actions on $\HC^\bullet(C^\infty(M))$ and $\HH^\bullet(C^\infty(M))^G$, one finds that the natural map
\begin{align}
\iota: \HH_{G,\cont}^\bullet(C^\infty(M)) \to \HH_\cont^\bullet(C^\infty(M))^G, \, [\phi] \mapsto [\phi]
\end{align}
is a well-defined isomorphism. Here, $\cont$ can be replaced by $\loc,\diff,\ncdiff$.
\end{theorem}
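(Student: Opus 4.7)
The plan is essentially to assemble the two preceding propositions, since Theorem \ref{TheoremProperIsomorphism} is nothing more than their conjunction. First I would observe that $\iota$ is well-defined as a linear map between the relevant graded vector spaces: this was already noted before Proposition \ref{PropInjectiveIota}, and it requires only that invariant coboundaries remain coboundaries when forgetting invariance, together with the fact that $\delta$ commutes with the $G$-action. Once well-definedness is in place, the proof reduces to verifying the two directions separately in each of the four analytical settings ($\cont$, $\loc$, $\diff$, $\ncdiff$).

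For injectivity, I would cite Proposition \ref{PropInjectiveIota} directly, together with the remark following its proof which notes that the averaging construction leaves locality, differentiability, and non-constant vanishing intact. The only thing to double-check is that the $G$-invariant partition of unity from Proposition \ref{PropositionInvariantPartition} (which requires properness) and the compactly supported partition $\{\xi_n\}$ interact correctly with whichever analytical subclass one has restricted to; since each $\xi_n$ is smooth and the averaged $\xi_n^\av$ is again smooth, multiplying by these functions preserves all four properties. Properness of the action is used precisely to guarantee the compactness of the set $G_{U,\xi}$ in Lemma \ref{LemmaAveraging} and to obtain the invariant partition $\{\chi_n\}$.

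For surjectivity, I would cite Proposition \ref{PropSurjectiveIota}, which does \emph{not} need properness but does need the HKR theorem, and therefore the restriction to one of $\cont,\loc,\diff,\ncdiff$. The key mechanism is that, given an invariant class $[\phi]$, the HKR preimage $X \in \mathfrak{X}^\bullet(M)$ satisfies $U(g \acts X) - U(X) \in \im \delta$ for every $g$, so by injectivity of $\mathcal U$ one gets $g \acts X = X$, hence $U(X)$ is a $G$-invariant cocycle representing $[\phi]$.

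The only real obstacle is bookkeeping: one must confirm that the averaging operator of Lemma \ref{LemmaAveraging}, the HKR isomorphism, and the partition-of-unity constructions can all be deployed simultaneously inside any single one of the four subcomplexes. Since each of these ingredients has already been observed to respect the analytical properties in question, the theorem follows by simply combining Propositions \ref{PropInjectiveIota} and \ref{PropSurjectiveIota} and noting that the hypothesis of properness covers the injectivity half while smoothness of the action plus HKR covers the surjectivity half.
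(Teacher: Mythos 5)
Your proposal matches the paper exactly: the theorem is presented there as a direct summary of Propositions \ref{PropInjectiveIota} and \ref{PropSurjectiveIota}, with properness feeding the injectivity half and the HKR theorem feeding the surjectivity half, just as you describe. No gaps.
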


\subsection{Invariant multivector fields and an invariant HKR map}

We briefly want to look into an invariant analogue of the HKR map. Let us first define the space of invariant multivector field, motivated by the action on multivector fields which we derived earlier:

\begin{definition}[$G$-invariant multivector fields]
For a smooth group action $\Phi^M$ of a Lie group $G$, define the \emph{$G$-invariant multivector fields on $M$} by
\begin{align}
\mathfrak{X}^\bullet(M)^G :&= \left \lbrace X \in \mathfrak{X}^\bullet(M) \mid g \acts X = X \quad \forall g \in G \right \rbrace.
\end{align}
\end{definition}

With our previous results, we are now able to state an invariant version of the HKR theorem which relates invariant multivector fields to both of our notions of invariant Hochschild cohomology.

\begin{theorem}[Invariant HKR]
Given a smooth action of a group $G$ on a manifold $M$, using the induced actions on $\mathfrak{X}^\bullet(M)$ and $\HH^\bullet(C^\infty(M))$, the HKR map $U$ from Equation (\ref{FormulaHKR}) induces an isomorphism (of vector spaces/super Lie algebras/Gerstenhaber algebras) to the space of invariant classes
\begin{align}
\mathcal{U}^G : \mathfrak{X}^\bullet(M)^G \to \HH^\bullet_{\cont} (C^\infty(M))^G.
\end{align}
If $G$ acts properly on $M$, it further induces an isomorphism (of vector spaces/super Lie algebras/Gerstenhaber algebras)
\begin{align}
\mathfrak{X}^\bullet(M)^G \to \HH^\bullet_{G,\cont} (C^\infty(M)).
\end{align}
Here, $\cont$ can be replaced with $\loc,\diff,\ncdiff$.
\end{theorem}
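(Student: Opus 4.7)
The plan is to leverage the two nontrivial results already established, namely the HKR theorem (Theorem \ref{TheoremHKR}) and the isomorphism $\iota$ for proper actions (Theorem \ref{TheoremProperIsomorphism}), together with the $G$-equivariance of $U$ proven just before the statement. Essentially all the hard technical work has been done, and what remains is to piece things together carefully on the level of invariants.

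First, I would establish the first isomorphism $\mathcal{U}: \mathfrak{X}^\bullet(M)^G \to \HH^\bullet_\cont(C^\infty(M))^G$. The paper has already verified that $g \acts U(X) = U(g \acts X)$ for all $g \in G$ and $X \in \mathfrak{X}^\bullet(M)$; this identity passes to cohomology classes, so the induced map $\mathcal{U}$ from Theorem \ref{TheoremHKR} is $G$-equivariant. Since any $G$-equivariant isomorphism of $G$-representations restricts to an isomorphism of the respective subspaces of $G$-invariants, the map $\mathcal{U}$ must restrict to a bijection $\mathfrak{X}^\bullet(M)^G \to \HH^\bullet_\cont(C^\infty(M))^G$. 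Concretely, injectivity on invariants is inherited from injectivity of $\mathcal{U}$, and surjectivity follows from the fact that if $[\phi] \in \HH^\bullet_\cont(C^\infty(M))^G$ and $X = \mathcal{U}^{-1}[\phi]$, then $\mathcal{U}(g \acts X) = g \acts [\phi] = [\phi] = \mathcal{U}(X)$, whence $g \acts X = X$ by injectivity of $\mathcal{U}$.

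Next, under the additional hypothesis that the action is proper, Theorem \ref{TheoremProperIsomorphism} supplies a canonical isomorphism
\begin{align*}
\iota: \HH^\bullet_{G,\cont}(C^\infty(M)) \xrightarrow{\;\sim\;} \HH^\bullet_\cont(C^\infty(M))^G.
\end{align*}
Composing with the inverse yields
\begin{align*}
\iota^{-1} \circ \mathcal{U}: \mathfrak{X}^\bullet(M)^G \xrightarrow{\;\sim\;} \HH^\bullet_{G,\cont}(C^\infty(M)),
\end{align*}
which is the desired second isomorphism. Finally, the paper remarks repeatedly that all intermediate results (HKR, the averaging construction, the proof of $\iota$ being an isomorphism) remain valid when $\cont$ is replaced by any of $\loc,\diff,\ncdiff$, so the same composition works verbatim in those settings.

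The only even potentially subtle step is making sure that the restriction of $\mathcal{U}$ to invariants lands in the correct target: one must confirm that invariance of a cohomology class is the right notion on the target side before applying $\iota^{-1}$. This is built into the statement of Theorem \ref{TheoremProperIsomorphism}, which explicitly identifies invariant \emph{classes} with classes of invariant \emph{cochains}, so no additional argument is needed. Thus the proof is essentially a diagram-chase assembling the pieces, and I do not expect any serious obstacle.
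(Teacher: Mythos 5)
Your proposal is correct and follows essentially the same route as the paper, which likewise deduces the first isomorphism from the $G$-equivariance of $\mathcal{U}$ (an equivariant isomorphism restricts to an isomorphism of invariants) and obtains the second by composing with the isomorphism $\iota$ of Theorem \ref{TheoremProperIsomorphism}. The paper compresses this into a one-line remark before the theorem statement, so your write-up is simply a more explicit version of the same argument.
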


\begin{proof}
As shown previously, the HKR isomorphism $\mathcal{U}$ is equivariant with respect to the actions of $G$ on $\HH^\bullet(\As)$ and $\mathfrak{X}^\bullet(M)$, hence it restricts to an isomorphism (of vector spaces, and by compatibility of the group action with the unmentioned, necessary operations an isomorphism of super Lie algebras and Gerstenhaber algebras as well) $\mathfrak{X}^\bullet(M)^G \to \HH^\bullet_{\cont} (C^\infty(M))^G$. \\
If the action is proper, by Theorem \ref{TheoremProperIsomorphism} the map 
\begin{align}
\iota : \HH^\bullet_{G, \cont}(C^\infty(M)) \to \HH^\bullet_\cont(C^\infty(M))^G
\end{align}
is an isomorphism, and thus $\iota^{-1} \circ \mathcal{U}^G$ gives us the desired isomorphism $\mathfrak{X}^\bullet(M)^G \to \HH^\bullet_{G,\cont} (C^\infty(M))$. This concludes the proof.
\end{proof}

\begin{example}[Homogeneous spaces]
By the invariant HKR Theorem, we see that whenever the given action is smooth, proper and \emph{transitive}, the invariant cohomology $\HH^\bullet_{G,\cont}(C^\infty(M))$ can be identified with a subspace of $\Lambda^\bullet T_p M$ for an arbitrary point $p \in M$, as an invariant vector field is in this case already fully determined by its value at a single point. As such, the cohomology spaces become finite-dimensional.

In particular, when $M$ is represented as a homogeneous space, i.e. $M = G/H$ with $H$ a closed Lie subgroup of the Lie group $G$, one has a proper action of $G$ on $M$ if and only if $H$ is compact: stabilizers of proper actions must be compact, and proving the other direction is a straightforward exercise using the definition of properness via sequences in $G$ and $G/H$. 

In this case, if $\mathfrak{g}$ and $\mathfrak{h}$ are the corresponding Lie algebras of $G$ and $H$, we have
\begin{align}
\HH_{G,\cont}^\bullet(C^\infty(G/H)) \cong \left( \Lambda^\bullet \mathfrak{g}/\mathfrak{h} \right)^H,
\end{align}
where the $H$ invariance is to be understood with respect to the adjoint action of $G$ restricted to $H$.
\end{example}
\appendix
\section{Continuity of properly averaged cochains}

\label{SectionLocallyConvex}
In this appendix, we want to show that the averaging procedures we defined in Section \ref{SectionProperHKR} map continuous cochains to continuous cochains. The notion of continuity is here induced by the topology of uniform convergence on compact subsets on $C^\infty(M)$.
The necessary theory about this locally convex topology can, for example, be found in \cite{jarchow,meise2013einfuhrung,rudin1991functional}. We shortly recall the construction of the seminorms on this space.

For any compact set $K$ lying within a chart $(U,x)$ and any $l \in \mathbb{N}$ define the corresponding seminorms as
\begin{align}
p_{k,K,U,x}(f) := \max_{p \in K, i_1, \dots, i_k} \left| \frac{\partial^k (f \circ x^{-1})}{\partial x^{i_1} \dotsb \partial x^{i_k}} (x(p)) \right|.
\end{align}
For details see for example \cite{hirsch2012differential}. Note that if our manifold is equipped with a Lie group action, we can also define an action on the space of seminorms: for all $f \in C^\infty(M)$ set
\begin{align}
\label{EquationChangingTrickSeminorms}
(g \acts p_{k,K,U,x})(g \acts f) := p_{k,K,U,x}(f).
\end{align} This implies:
\begin{align}
g \acts p_{k,K,U,x} = p_{k,g \, \acts \, K, g \, \acts \, U, g \, \acts \, x}
\end{align}
where the diffeomorphism $M \to M, p \mapsto g  \acts p$ carries the chart $(U,x)$ to another chart $(g \acts U, g \acts x)$.
This notational trick will greatly simplify some of the expressions in the following proof:

\begin{lemma}[Continuity of averaged cochains]
\label{LemmaContinuousAverage}
Let $R$ be a compact subset of a Lie group $G$ with an action $\Phi^M : G \times M \to M$ on a smooth manifold $M$. For a continuous $\psi \in \HC_\cont^n(C^\infty(M))$, the map $\psi^\av := \int_R (g \acts \psi) \Omega(g)$ is also continuous, where the integration is performed with respect to some invariant measure as in Section \ref{SectionProperHKR}.
\end{lemma}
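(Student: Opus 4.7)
The plan is to verify the seminorm estimate that characterizes continuity of the multilinear map $\psi^\av$ between the locally convex spaces $C^\infty(M)^{\otimes n}$ and $C^\infty(M)$. Fix an arbitrary seminorm $p = p_{k,K,U,x}$ on $C^\infty(M)$; the goal is to bound $p(\psi^\av(f_1,\dots,f_n))$ by a constant times a product of seminorms of the $f_i$.

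First, I would use the Leibniz integral rule to exchange differentiation in the spatial variable with integration in $g$, which is justified because the integrand is continuous in $g \in R$, smooth in the spatial variable, and $R$ is compact so everything is bounded. Together with the elementary estimate $p(\int F\,\Omega) \leq \int p(F)\,|\Omega|$ for seminorms, this gives
\begin{align*}
p\bigl(\psi^\av(f_1,\dots,f_n)\bigr) \;\leq\; \int_R p\bigl((g \acts \psi)(f_1,\dots,f_n)\bigr)\, |\Omega(g)|.
\end{align*}
Writing $(g \acts \psi)(f_1,\dots,f_n) = g \acts \psi(g^{-1} \acts f_1,\dots,g^{-1} \acts f_n)$ and invoking the seminorm transformation rule of Equation~(\ref{EquationChangingTrickSeminorms}), the integrand becomes
\begin{align*}
p_{k,\, g^{-1} \acts K,\, g^{-1} \acts U,\, g^{-1} \acts x}\bigl(\psi(g^{-1} \acts f_1,\dots,g^{-1} \acts f_n)\bigr).
\end{align*}

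The heart of the argument is then to control these $g$-dependent seminorms uniformly on $R$. Since $R$ is compact and the action is continuous, $R^{-1} \acts K$ is a compact subset of $M$, which I would cover by finitely many fixed charts $(V_1,y_1),\dots,(V_m,y_m)$. By the chain rule, every derivative of order at most $k$ in the shifted chart $g^{-1} \acts x$ can be written as a linear combination of derivatives of order at most $k$ in the $y_j$, with coefficients that are polynomial expressions in the partial derivatives of the transition maps between $g^{-1} \acts x$ and the $y_j$. Smoothness of the action together with compactness of $R$ ensures that these transition derivatives are jointly continuous in $g$ and the spatial variable, and hence uniformly bounded on $R \times (R^{-1} \acts K)$. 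This yields a fixed constant $C_1$ and compact sets $L_j \subset V_j$ so that
\begin{align*}
p_{k,\, g^{-1} \acts K,\, g^{-1} \acts U,\, g^{-1} \acts x}(h) \;\leq\; C_1 \sum_{j=1}^m p_{k, L_j, V_j, y_j}(h)
\end{align*}
uniformly in $g \in R$ and $h \in C^\infty(M)$.

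Finally, I would apply continuity of $\psi$ to each fixed target seminorm $p_{k, L_j, V_j, y_j}$, producing seminorms $q_{ij}$ and constants $C_j$ with $p_{k, L_j, V_j, y_j}(\psi(h_1,\dots,h_n)) \leq C_j \prod_i q_{ij}(h_i)$. Applying the same uniform-bounding argument once more to each factor $q_{ij}(g^{-1} \acts f_i)$ replaces it by a fixed seminorm of $f_i$ up to a constant uniform in $g \in R$; integrating over $R$, which has finite $\Omega$-measure, collapses the chain of estimates into the desired bound $p(\psi^\av(f_1,\dots,f_n)) \leq C \prod_i \tilde q_i(f_i)$. The main obstacle is the uniform seminorm bound of the third step, as everything else is bookkeeping: smoothness of the action combined with compactness of $R$ is exactly what is needed to make the chart-dependent expressions behave well, and that is where the real technical work lies.
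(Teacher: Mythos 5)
Your proof is correct and follows the same overall strategy as the paper's: pull the target seminorm under the integral, use the transformation rule of Equation \ref{EquationChangingTrickSeminorms} to shift the $g$-dependence from the cochain onto the seminorms, and then exploit compactness of $R$ to replace the resulting $g$-indexed family of seminorms by finitely many fixed ones. Where you genuinely differ is in the order of the uniformization relative to the application of continuity of $\psi$. The paper applies the continuity estimate for $\psi$ separately to each shifted seminorm $g^{-1}\acts p_{k,K,U,x}$, which a priori produces $g$-dependent seminorms $p_{l_i,L_i,U_i,x_i}$ \emph{and} a $g$-dependent constant $C$, and then uniformizes the seminorms afterwards by covering $R\acts L_i$ with finitely many thickened compacta $\widehat L_i(g_{t_i})$; the uniformity of the constant is left implicit there. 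You instead first dominate the shifted source seminorm $p_{k,\,g^{-1}\acts K,\,g^{-1}\acts U,\,g^{-1}\acts x}$ by finitely many \emph{fixed} seminorms, uniformly in $g\in R$, via the chain rule and boundedness of the transition-map derivatives on the compact set $R\times(R^{-1}\acts K)$, which is legitimate since the action is jointly smooth. Continuity of $\psi$ then only needs to be invoked for finitely many fixed seminorms, so the constants are automatically uniform, and the same chart-transition argument disposes of the remaining $g$-dependence in the argument seminorms $q_{ij}(g^{-1}\acts f_i)$. This buys a cleaner bookkeeping of the constant and makes explicit where joint smoothness of the action enters, at the price of spelling out the chain-rule estimate that the paper's $\widehat L_i(g_{t_i})$ construction leaves in the background. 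Both routes land on the same final bound by a product of finite maxima of seminorms.
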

\begin{proof}
Continuity of $\psi$ is equivalent to the following: for every seminorm $p_{k,K,U,x}$ there exist finitely many seminorms $p_{l_i,L_i,U_i,x_i}$ with $i = 1, \dots, n$ and a constant $C \geq 0$ so that
\begin{align}
p_{k,K,U,x}(\psi(f_1,\dots,f_n))  \leq C \cdot  p_{l_1,L_1,U_1,x_1} (f_1) \dotsm p_{l_n,L_n,U_n,x_n}(f_n)
\end{align}

Then, for every $g \in R$, there also exist such $l_i, L_i,U_i,x_i,C$ so that

\begin{align}
\begin{split}
p_{k,K,U,x}((g \acts \psi) (f_1,\dots,f_n)) 
 &= 
(g^{-1} \acts p_{k,K,U,x}) (\psi (g^{-1} \acts f_1,\dots,g^{-1} \acts f_n))
\\  &\leq C \cdot 
p_{l_1, L_1, U_1, x_1}(g^{-1} \acts f_1) \dotsm
p_{l_n, L_n, U_n, x_n}(g^{-1} \acts f_n) 
\\ &=
(g \acts p_{l_1,L_1,U_1,x_1}) (f_1) \dotsm
(g \acts p_{l_n,L_n,U_n,x_n}) (f_n). \label{InequalityContinuityAverage}
\end{split}
\end{align}
Hence, $g \acts \psi$ is a continuous map.
Similar inequalities can be used for the averaging procedure:

\begin{align*}
p_{k,K,U,x}(\psi^\av (f_1,\dots,f_n)) &=
\max_{\substack{p \in K \\ i_1, \dots, i_k \in \mathbb{N} }} \left| \int_R \frac{\partial^k (g \acts \psi)(f_1,\dots, f_n)\circ x^{-1}}{\partial x^{i_1} \dots \partial x^{i_k}} (x(p)) \Omega(g) \right|\\
& \leq
\int_R
\max_{\substack{p \in K \\ i_1, \dots, i_k \in \mathbb{N} }} \left| \frac{\partial^k (g \acts \psi)(f_1,\dots, f_n)\circ x^{-1}}{\partial x^{i_1} \dots \partial x^{i_k}} (x(p))  \right| \Omega(g)
\\ &=
\int_R p_{k,K,U,x}((g \acts \psi)(f_1, \dots, f_n)) \Omega(g)
\\ &\leq
\Vol(R) \cdot \max_{g \in R} p_{k, K,U,x}((g \acts \psi)(f_1, \dots,f_n)) 
\\ &\leq
C \cdot \Vol(R) \cdot
\max_{g \in R} 
(g \acts p_{l_1, L_1, U_1, x_1}) (f_1) \dotsm\\ & \phantom{ohjemine heribert und maria} \dotsm
(g \acts p_{l_n, L_n,U_n, x_n}) (f_n),
\end{align*}
where the last inequality is due to Inequality (\ref{InequalityContinuityAverage}). Note that the last maximum is indeed a maximum, because using Equation (\ref{EquationChangingTrickSeminorms}), we can move the $g$-dependence of the seminorm into its argument, so that the expression becomes a continuous function in $g$. Now, for every compact set $g \acts L_i$ in a coordinate patch, using an exhaustion of the coordinate patch by compact sets, one can construct a set $\widehat L_{i}(g)$, so that $g \acts L_i$ lies in the interior of $\widehat L_i(g)$ and $\widehat L_i(g)$ is still a subset of the coordinate patch. The interiors of the $\widehat L_i(g)$ for all $g \in R$ then yield an open cover of $R \acts L_i$, so the maximum over all the $g \in R$ in the above chain of inequalities is assumed in one of the $\widehat L_i(g)$. However, as $R \acts L_i$ is a compact set, finitely many $\widehat L_i(g_{t_i}), t_i = 1, \dots, r_i$ are sufficient to cover $R \acts L_i$, so the maximum is assumed in one of the finitely many $\widehat L_i(g_{t_i})$. The choice of the $\widehat L_i(g_{t_i})$ does not depend on the $f_1, \dots, f_n$, so one finds for every $i = 1, \dots, n$
\begin{align}
\max_{g \in R} (g \acts p_{l_i, L_i,U_i,x_i}) (f_i) \leq \max_{t_i = 1, \dots, r_i} p_{l_i, \widehat L_i(g_{t_i}), g_{t_i} \acts U_i, g_{t_i} \acts x_i} (f_i)
\end{align}
and finally
\begin{align*}
p_{k,K,U,x}(\psi^\av (f_1,\dots,f_n)) \leq 
C \cdot \Vol(R) \cdot 
 &\left(\max_{t_1 = 1, \dots, r_1}p_{l_1, \widehat L_1(g_{t_1}), g_{t_1} \acts U_1, g_{t_1} \acts x_1} (f_1) \right) \dotsm \\  &
 \dotsm \left( \max_{t_n = 1, \dots, r_n}p_{l_1, \widehat L_n(g_{t_n}), g_{t_n} \acts U_n, g_{t_n} \acts x_n} (f_n) \right) .
\end{align*}
Finite maxima of seminorms are again seminorms, and multiplication of a seminorm with non-negative constants again yields a seminorm. As such, continuity of the averaged map is shown.
\end{proof}

\begin{lemma}[Continuity of locally finite sums]
\label{LemmaContinuousLocallyFiniteSum}
Consider a partition of unity $\{\chi_i\}_{i \in I}$ of a smooth manifold $M$. Then the locally finite sum
\begin{align}
\sum_{i \in I} \chi_i \cdot \psi_i
\end{align}
of continuous cochains $\psi_i \in \HC^n(C^\infty(M))$ is again a continuous cochain. 
\end{lemma}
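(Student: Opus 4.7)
My plan is to reduce the apparently infinite sum to a finite one via local finiteness of the partition of unity, and then estimate each summand individually using the continuity hypothesis on the $\psi_i$ together with the Leibniz rule.

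\textbf{Step 1 (Reduction to a finite sum).} Fix a seminorm $p_{k,K,U,x}$, where $K$ is compact inside the chart domain $U$. Since $\{\chi_i\}_{i \in I}$ is locally finite and $K$ is compact, the set $I_K := \{ i \in I \mid \supp \chi_i \cap K \neq \emptyset\}$ is finite. For every $i \notin I_K$, the function $\chi_i \cdot \psi_i(f_1,\dots,f_n)$ together with all its partial derivatives in the chart $(U,x)$ vanishes identically on $K$. Consequently, for any tuple $(f_1,\dots,f_n)$ and any point $p \in K$, the evaluation $\bigl(\sum_i \chi_i \cdot \psi_i\bigr)(f_1,\dots,f_n)(p)$ reduces to the finite sum over $I_K$, and the same reduction holds for every partial derivative appearing in $p_{k,K,U,x}$.

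\textbf{Step 2 (Leibniz estimate for a single term).} For each $i \in I_K$, expand $\partial^k\bigl(\chi_i \cdot \psi_i(f_1,\dots,f_n) \circ x^{-1}\bigr)$ via the Leibniz rule into a finite linear combination of products of derivatives of $\chi_i \circ x^{-1}$ with derivatives of $\psi_i(f_1,\dots,f_n) \circ x^{-1}$ of order $\leq k$. Because $K$ is compact and $\chi_i$ is smooth, each derivative of $\chi_i$ is uniformly bounded on $K$. Therefore there exists a constant $D_i \geq 0$ such that
\begin{align*}
p_{k,K,U,x}\bigl(\chi_i \cdot \psi_i(f_1,\dots,f_n)\bigr) \leq D_i \sum_{j=0}^{k} p_{j,K,U,x}\bigl(\psi_i(f_1,\dots,f_n)\bigr).
\end{align*}

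\textbf{Step 3 (Applying continuity of the $\psi_i$ and concluding).} By continuity of each $\psi_i$, for every $i \in I_K$ and every $j \in \{0,\dots,k\}$ there exist finitely many seminorms $p_{l^{i,j}_r, L^{i,j}_r, U^{i,j}_r, x^{i,j}_r}$ and a constant $C_{i,j} \geq 0$ with
\begin{align*}
p_{j,K,U,x}\bigl(\psi_i(f_1,\dots,f_n)\bigr) \leq C_{i,j} \prod_{r=1}^{n} p_{l^{i,j}_r, L^{i,j}_r, U^{i,j}_r, x^{i,j}_r}(f_r).
\end{align*}
Combining Steps 1--3 and bounding a finite sum of products of seminorms by a single constant times a product of the maxima of the relevant seminorms (using that maxima and non-negative linear combinations of seminorms are again seminorms), one obtains an estimate of the form $p_{k,K,U,x}\bigl(\sum_i \chi_i \cdot \psi_i(f_1,\dots,f_n)\bigr) \leq C \prod_{r=1}^{n} q_r(f_r)$ for suitable seminorms $q_r$. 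This is exactly the continuity condition.

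\textbf{Expected difficulty.} There is no real obstacle; the proof is bookkeeping. The only point that deserves care is the very first reduction: the locally finite sum is, a priori, infinite, so one must verify that local finiteness of the $\chi_i$ interacts correctly with the fact that the seminorm $p_{k,K,U,x}$ is supported on a single compact set lying in a single chart, so that only the finitely many $i \in I_K$ contribute. Once that reduction is made, the Leibniz rule and the continuity of each individual $\psi_i$ do the rest.
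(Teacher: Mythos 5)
Your proof is correct and follows essentially the same route as the paper: local finiteness plus compactness of $K$ reduces the sum to finitely many terms, after which continuity of each $\psi_i$ and the fact that finite maxima and non-negative combinations of seminorms are again seminorms finish the argument. The only cosmetic difference is that you carry out the Leibniz-rule estimate for derivatives of arbitrary order $k$ explicitly, whereas the paper treats only the zeroth-order seminorms and remarks that higher orders follow by the product rule.
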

\begin{proof}
The calculation will in the following only be done for seminorms in zeroth order of differentiation. In higher orders of differentiation one only receives factors with the corresponding maxima of the derivatives of $\chi_i$, and via product rule one receives multiple summands, for which just the same calculations can be done. 
We also drop all references to charts $(U,x)$, as they do not play a role in the following. Let $K$ be some compact set within a coordinate patch
\begin{align}
p_{0,K} \left( \sum_{i \in I} \chi_i \cdot \psi_i(f_1,\dots,f_n) \right) =
\max_{x \in K} \left| \sum_{i} \chi_i(x) \psi_i(f_1,\dots,f_n)(x) \right|.
\end{align}
For every $x \in K$, because of the locally finiteness, there exists an open neighbourhood $U_x \subset K$, so that the sum is finite on $U_x$. The union of these neighbourhoods is an open cover of $K$, by compactness of $K$ there then exist finitely many $U_{x_j}$, $j = 1, \dots, m$, so that $K = \bigcup_{j=1}^n U_{x_j}$. It follows that the maximum is assumed in one of the finitely many $U_{x_j}$, and by consequence in one of the finitely many $\overline{U_{x_j}}$, which are compact as a closed subset of the compact $K$. The largest index $l$, so that the $\chi_l$ do not vanish on $U_{x_j}$ is denoted by $l_j$, with which one can write:
\begin{align*}
p_{0,K} \left( \sum_{i \in I} \chi_i \cdot \psi_i(f_1,\dots,f_n) \right) 
& =
\max_{j =1, \dots, n} \max_{x \in \overline{U_{x_j}}} \left| \sum_{i=1}^{l_j} \chi_i(x) \cdot \psi_i(f_1,\dots,f_n)(x) \right|
 \\ 
 & \leq
\max_{j =1, \dots, n} \max_{x \in \overline{U_{x_j}}} \max_{i=1,\dots,l_j}  \left|\psi_i(f_1,\dots,f_n)(x) \right| 
\\
& =
\max_{j =1, \dots, n} \max_{i=1,\dots,l_j}  p_{0, \overline{U_{x_j}}} (\psi_i(f_1,\dots,f_n) ) .
\end{align*}
Note that the choice of $U_{x_j}$ does not depend on the $f_i$. As finite maxima of seminorms yield seminorms again, the above inequality proves continuity of the locally finite sum.
\end{proof}

\section*{Acknowledgments}
The author would like to thank Dr. Bas Janssens from the TU Delft and Prof. Dr. Stefan Waldmann, for their continued support and instructive explanations during the writing of this document. Thanks also to Dr. Matthias Sch{\"o}tz and the anonymous referees for their constructive criticism. The author was supported by the NWO Vidi Grant 639.032.734 ``Cohomology and representation theory of infinite-dimensional Lie groups''.      


\end{document}